\theoremstyle{plain}
\newtheorem{thm}{Theorem}
\newtheorem{lem}[thm]{Lemma}
\newtheorem{prop}[thm]{Proposition}
\newtheorem{cor}[thm]{Corollary}
\theoremstyle{definition}
\newtheorem{defn}[thm]{Definition}
\theoremstyle{remark}
\newcommand{\de}{\mathrm{d}}
\newcommand{\Le}{L}
\newcommand{\qdot}{\Delta^\frac{1}{2}}
\DeclareMathAlphabet{\mathcal}{OMS}{zplm}{m}{n}
\title{Unitarization of the Horocyclic Radon Transform on Homogeneous Trees}
\date{}
\author{Francesca Bartolucci\thanks{Department of Mathematics, ETH Zurich, Raemistrasse 101, 8092 Zurich, Switzerland, e-mail: francesca.bartolucci@sam.math.ethz.ch} \and Filippo De Mari\thanks{Department of Mathematics \& MaLGa Center,
   University of Genoa, Via Dodecaneso 35, 16146 Genova, Italy, email: demari@dima.unige.it, m.monti@dima.unige.it} \and Matteo Monti\footnotemark[2]}
\begin{document}

\maketitle

		\begin{abstract}
		Following previous work in the continuous setup, we construct the unitarization of the horocyclic Radon transform on a homogeneous tree $X$ and we show that it intertwines the quasi regular representations of the group of isometries of $X$ on the tree itself and on the space of horocycles.
	\end{abstract}

\vspace{0.5mm}
{\small \textbf{Key words.} Homogeneous trees, horocyclic Radon transform, dual pairs, quasi regular representations.}

\vspace{1mm}	
{\small \textbf{AMS subject classification.} 44A12, 20E08, 22D10.}

	\section*{Introduction}

The horocyclic Radon transform on homogeneous trees was introducted by P.~Cartier~\cite{cart} and   studied by A.~Fig\`a-Talamanca and M.A.~Picardello \cite{ftp}, W.~Betori, J.~Faraut and M.~Pagliacci \cite{bfp}, M.~Cowling, S.~Meda and A.G.~Setti \cite{cms},  E.~Casadio Tarabusi, J.~Cohen and F.~Colonna \cite{ctcc}, and A.~Veca \cite{veca}, to name a few. Some of the typical issues considered are inversion formul{\ae}  and range problems. In this paper, we  treat the unitarization problem, that is the determination of some kind of  pseudo-differential operator such that the pre-composition with the Radon transform yields a unitary operator. This is a classical aspect of Radon theory,  addressed first by 
Helgason \cite{radon}  in the case of the polar Radon transform.  In  \cite{abdd}, the  authors consider a 
general setup that may be recast as a variation of the setup of dual pairs $(X,\Xi)$ {\it \`a la} Helgason~\cite{hel1970} or~\cite{radon}.  They prove a general result concerning the unitarization of the Radon transform ${\mathcal R}$ from $L^2(X,\de x)$ to  $L^2(\Xi,\de\xi)$ and then show  that the resulting unitary operator intertwines the quasi regular representations of $G$ on $L^2(X,\de x)$ and $L^2(\Xi,\de\xi)$.  As already mentioned, this unitarization really means first composing (the closure of) ${\mathcal R}$ with a suitable pseudo-differential operator and then extending this composition to a unitary map, as it is done in the existing and well known precedecessors of this result \cite{radon}, \cite{solmon76}. The construction of the unitary map is the crucial step in finding the explicit and new inversion formula for the Radon transform which holds under the hypotheses assumed in~\cite{abdd}, primarily the facts that the quasi regular representations of $G$ on $X$ and $\Xi$  are both irreducible and both square integrable. This kind of consequence was one of our main motivations for adressing the issue of building the unitary ``extension'' in the context of homogeneous trees. The techniques used in~\cite{abdd}
cannot be transferred directly to the case of homogeneous trees primarily because the quasi regular representation is not irreducible, much less square integrable, a fact that we explicitly recall in Appendix~\ref{appendiceirri}. Hence, we adopt here a combination of the 
classical approach followed by Helgason  in the symmetric space case~\cite{gass} and the  techniques that have been exploited in \cite{acha}.
The paper is organized in three sections. In Section 1, we present the main notions and the relevant results in the theory of homogeneous trees.  Then, we give a brief overview 
of the Helgason-Fourier transform. In Section 2, we  recall the horocyclic Radon transform on homogeneous trees, we present its link with the Helgason-Fourier transform and we show its intertwining properties with quasi regular representations. Finally,
in Section 3, we prove the unitarization theorem for the horocyclic Radon transform. 
For the reader's convenience, we add two short appendices. In the first, we briefly recall the notion of dual pair and in the second we indicate why the quasi regular representation on $L^2(X)$ is not irreducible.
\\

\section{Preliminaries} 

In Subsections~\ref{HT} through \ref{R} we recall the basic definitions and facts that will be used throughout,  focusing on the space of horocycles. In particular, we describe the various group actions that are relevant in order to apply the machinery of dual pairs that was devised by Helgason (see Appendix~\ref{appendiceHel}).  Subsection~\ref{FT} is devoted to a brief overview 
	of the Helgason-Fourier transform. Standard references for these are \cite{bfp}, \cite{cms} and \cite{ftn}.


	\subsection{Homogeneous trees}\label{HT}

A \textit{graph} is a pair $(X,\mathfrak{E})$, where $X$ is the set of \textit{vertices} and $\mathfrak{E}$ is the family of \textit{edges}, where an edge is a two-element subset of $X$. We often think of an edge as a segment joining two vertices. If two vertices are joint by a segment, they are called \textit{adjacent}. A \textit{tree} is an undirected, connected, loop-free graph. In this paper we are interested in homogeneous trees. A $q$-\textit{homogeneous} tree is a tree in which each vertex has exactly $q+1$ adjacent vertices. If $q\geq 1$, a $q$-homogeneous tree is infinite. From now on, we suppose $q\geq 2$ in order to exclude trivial cases, that is, segments and lines.
\par
Given $u,v\in X$ with $u\neq v$, we denote by $[u,v]$ the unique ordered $t$-uple $(x_0=u,x_1,\dots, x_{t-1}=v)\in X^t$, where $\{x_i,x_{i+1}\}\in\mathfrak{E}$ and all the $x_i$ are distinct. We call $[u,v]$ a (finite) $t$\textit{-chain} and we think of it as a path starting at $u$ and ending at $v$ or, equivalently, as the finite sequence of consecutive $2$-chains $[u,x_1],[x_1,x_2],\dots,[x_{t-2},v]$. With slight abuse of notation, if $[u,v]=(x_0,\dots,x_{t-1})$ we write $u,v,x_i\in [u,v]$ and $[u,x_i]\cup[x_i,v]$, $i\in\{1,\dots,t-2\}$. In particular, if $u$ and $v$ are adjacent, both $[u,v],[v,u]\in X^2$ are oriented, unlike the edge $\{u,v\}\in \mathfrak{E}$ which is not. A homogeneous tree $X$ carries a natural distance $d\colon X\times X\to\mathbb{N}$, where for every $u,v\in X$ the distance $d(u,v)$ is the number of  $2$-chains in the path $[u,v]$.
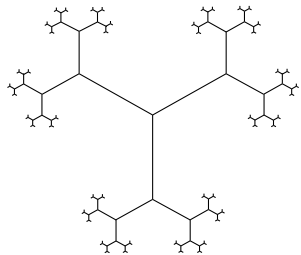
\begin{figure}[h]
	\begin{center}
\begin{tikzpicture}[x=0.75pt,y=0.75pt,yscale=-1,xscale=1, scale=0.7]
\draw    (157.65,60.55) -- (210.04,90.22) ;
\draw    (210.04,90.22) -- (210.04,150.87) ;
\draw    (210.04,90.22) -- (262.45,60.55) ;
\draw    (262.45,60.55) -- (289.25,75.35) ;
\draw    (157.65,60.55) -- (131.25,75.35) ;
\draw    (157.65,60.55) -- (157.65,30.15) ;
\draw    (262.45,60.55) -- (262.45,30.1) ;
\draw    (210.04,150.87) -- (236.84,165.67) ;
\draw    (210.04,150.87) -- (183.64,165.67) ;
\draw    (289.45,75.16) -- (289.45,90.25) ;
\draw    (289.45,75.16) -- (302.49,67.78) ; 
\draw    (302.49,67.78) -- (309.16,71.46) ;
\draw    (302.49,67.78) -- (302.49,60.2) ; 
\draw    (289.45,90.25) -- (296.12,93.93) ;
\draw    (289.45,90.25) -- (282.88,93.93) ;
\draw    (249.41,23.18) -- (262.45,30.56) ;
\draw    (262.45,30.56) -- (275.49,23.18) ;
\draw    (275.49,23.18) -- (282.16,26.86) ;
\draw    (249.41,23.18) -- (242.85,26.86) ;
\draw    (249.41,23.18) -- (249.41,15.62) ;
\draw    (275.49,23.18) -- (275.49,15.6) ;
\draw    (144.66,23.36) -- (157.7,30.1) ; 
\draw    (157.7,30.1) -- (170.74,23.36) ;
\draw    (170.74,23.36) -- (177.41,26.72) ;
\draw    (144.66,23.36) -- (138.1,26.72) ;
\draw    (144.66,23.36) -- (144.66,16.46) ;
\draw    (170.74,23.36) -- (170.74,16.45) ;
\draw    (118.1,67.89) -- (131.14,75.27) ;
\draw    (131.14,75.27) -- (131.14,90.36) ;
\draw    (118.1,67.89) -- (111.54,71.57) ;
\draw    (118.1,67.89) -- (118.1,60.33) ;
\draw    (131.14,90.36) -- (137.81,94.04) ;
\draw    (131.14,90.36) -- (124.57,94.04) ;
\draw    (236.81,165.54) -- (236.81,180.63) ;
\draw    (236.81,165.54) -- (249.84,158.15) ;
\draw    (249.84,158.15) -- (256.51,161.84) ;
\draw    (249.84,158.15) -- (249.84,150.58) ;
\draw    (236.81,180.63) -- (243.47,184.31) ;
\draw    (236.81,180.63) -- (230.24,184.31) ;
\draw    (170.91,158.28) -- (183.95,165.66) ;
\draw    (183.95,165.66) -- (183.95,180.75) ;
\draw    (170.91,158.28) -- (164.35,161.96) ;
\draw    (170.91,158.28) -- (170.91,150.72) ;
\draw    (183.95,180.75) -- (190.62,184.43) ;
\draw    (183.95,180.75) -- (177.38,184.43) ; 
\draw    (296.19,93.87) -- (296.19,97.71) ;
\draw    (296.19,93.87) -- (299.5,92) ;
\draw    (299.5,92) -- (301.19,92.93) ;
\draw    (299.5,92) -- (299.5,90.07) ;
\draw    (296.24,97.71) -- (297.93,98.64) ;
\draw    (296.24,97.71) -- (294.57,98.64) ;
\draw    (309.14,71.45) -- (309.14,75.29) ;
\draw    (309.14,71.45) -- (312.46,69.57) ;
\draw    (312.46,69.57) -- (314.15,70.51) ;
\draw    (312.46,69.57) -- (312.46,67.65) ;
\draw    (309.19,75.29) -- (310.89,76.22) ;
\draw    (309.19,75.29) -- (307.52,76.22) ;
\draw    (282.05,26.7) -- (282.05,30.53) ;
\draw    (282.05,26.7) -- (285.37,24.82) ;
\draw    (285.37,24.82) -- (287.06,25.75) ;
\draw    (285.37,24.82) -- (285.37,22.89) ;
\draw    (282.1,30.53) -- (283.8,31.47) ;
\draw    (282.1,30.53) -- (280.43,31.47) ;
\draw    (176.67,26.47) -- (176.67,30.31) ;
\draw    (176.67,26.47) -- (179.99,24.6) ;
\draw    (179.99,24.6) -- (181.68,25.53) ;
\draw    (179.99,24.6) -- (179.99,22.67) ;
\draw    (176.73,30.31) -- (178.42,31.24) ;
\draw    (176.73,30.31) -- (175.06,31.24) ;
\draw    (137.87,94.07) -- (137.87,97.91) ;
\draw    (137.87,94.07) -- (141.19,92.2) ;
\draw    (141.19,92.2) -- (142.88,93.13) ;
\draw    (141.19,92.2) -- (141.19,90.27) ;
\draw    (137.93,97.91) -- (139.62,98.84) ;
\draw    (137.93,97.91) -- (136.26,98.84) ;
\draw    (243.47,184.27) -- (243.47,188.11) ;
\draw    (243.47,184.27) -- (246.79,182.4) ;
\draw    (246.79,182.4) -- (248.48,183.33) ;
\draw    (246.79,182.4) -- (246.79,180.47) ;
\draw    (243.53,188.11) -- (245.22,189.04) ;
\draw    (243.53,188.11) -- (241.86,189.04) ;
\draw    (256.27,161.67) -- (256.27,165.51) ;
\draw    (256.27,161.67) -- (259.59,159.8) ;
\draw    (259.59,159.8) -- (261.28,160.73) ;
\draw    (259.59,159.8) -- (259.59,157.87) ;
\draw    (256.33,165.51) -- (258.02,166.44) ;
\draw    (256.33,165.51) -- (254.66,166.44) ;
\draw    (190.47,184.27) -- (190.47,188.11) ;
\draw    (190.47,184.27) -- (193.79,182.4) ;
\draw    (193.79,182.4) -- (195.48,183.33) ;
\draw    (193.79,182.4) -- (193.79,180.47) ;
\draw    (190.53,188.11) -- (192.22,189.04) ;
\draw    (190.53,188.11) -- (188.86,189.04) ;
\draw    (279.63,91.93) -- (282.94,93.81) ;
\draw    (282.94,93.81) -- (282.94,97.64) ;
\draw    (279.63,91.93) -- (277.96,92.87) ;
\draw    (279.63,91.93) -- (279.63,90.01) ;
\draw    (282.99,97.64) -- (284.69,98.58) ;
\draw    (282.99,97.64) -- (281.32,98.58) ; 
\draw    (239.43,25.13) -- (242.74,27.01) ;
\draw    (242.74,27.01) -- (242.74,30.84) ;
\draw    (239.43,25.13) -- (237.76,26.07) ;
\draw    (239.43,25.13) -- (239.43,23.21) ;
\draw    (242.79,30.84) -- (244.49,31.78) ;
\draw    (242.79,30.84) -- (241.12,31.78) ;
\draw    (134.83,24.93) -- (138.14,26.81) ;
\draw    (138.14,26.81) -- (138.14,30.64) ;	
\draw    (134.83,24.93) -- (133.16,25.87) ;
\draw    (134.83,24.93) -- (134.83,23.01) ;
\draw    (138.19,30.64) -- (139.89,31.58) ;
\draw    (138.19,30.64) -- (136.52,31.58) ;
\draw    (108.23,69.53) -- (111.54,71.41) ;
\draw    (111.54,71.41) -- (111.54,75.24) ;
\draw    (108.23,69.53) -- (106.56,70.47) ;
\draw    (108.23,69.53) -- (108.23,67.61) ;
\draw    (111.59,75.24) -- (113.29,76.18) ;
\draw    (111.59,75.24) -- (109.92,76.18) ;
\draw    (121.23,91.93) -- (124.54,93.81) ;
\draw    (124.54,93.81) -- (124.54,97.64) ;
\draw    (121.23,91.93) -- (119.56,92.87) ;
\draw    (121.23,91.93) -- (121.23,90.01) ;
\draw    (124.59,97.64) -- (126.29,98.58) ;
\draw    (124.59,97.64) -- (122.92,98.58) ;
\draw    (174.23,182.53) -- (177.54,184.41) ;
\draw    (177.54,184.41) -- (177.54,188.24) ;
\draw    (174.23,182.53) -- (172.56,183.47) ;
\draw    (174.23,182.53) -- (174.23,180.61) ;
\draw    (177.59,188.24) -- (179.29,189.18) ;
\draw    (177.59,188.24) -- (175.92,189.18) ;
\draw    (161.23,160.13) -- (164.54,162.01) ;
\draw    (164.54,162.01) -- (164.54,165.84) ;
\draw    (161.23,160.13) -- (159.56,161.07) ;
\draw    (161.23,160.13) -- (161.23,158.21) ;
\draw    (164.59,165.84) -- (166.29,166.78) ;
\draw    (164.59,165.84) -- (162.92,166.78) ;
\draw    (226.83,182.33) -- (230.14,184.21) ;
\draw    (230.14,184.21) -- (230.14,188.04) ;
\draw    (226.83,182.33) -- (225.16,183.27) ;
\draw    (226.83,182.33) -- (226.83,180.41) ;
\draw    (230.19,188.04) -- (231.89,188.98) ;
\draw    (230.19,188.04) -- (228.52,188.98) ;
\draw    (299.23,58.93) -- (302.54,60.81) ;
\draw    (302.54,60.81) -- (305.86,58.93) ;
\draw    (305.86,58.93) -- (307.55,59.87) ;
\draw    (299.23,58.93) -- (297.56,59.87) ;
\draw    (299.23,58.93) -- (299.23,57.01) ;
\draw    (305.86,58.93) -- (305.86,57) ;
\draw    (114.83,59.23) -- (118.14,61.11) ;
\draw    (118.14,61.11) -- (121.46,59.23) ;
\draw    (121.46,59.23) -- (123.15,60.17) ;
\draw    (114.83,59.23) -- (113.16,60.17) ;
\draw    (114.83,59.23) -- (114.83,57.31) ;
\draw    (121.46,59.23) -- (121.46,57.3) ;
\draw    (246.03,14.33) -- (249.34,16.21) ;
\draw    (249.34,16.21) -- (252.66,14.33) ;
\draw    (252.66,14.33) -- (254.35,15.27) ;
\draw    (246.03,14.33) -- (244.36,15.27) ;
\draw    (246.03,14.33) -- (246.03,12.41) ;
\draw    (252.66,14.33) -- (252.66,12.4) ;
\draw    (272.23,13.73) -- (275.54,15.61) ;
\draw    (275.54,15.61) -- (278.86,13.73) ;
\draw    (278.86,13.73) -- (280.55,14.67) ;
\draw    (272.23,13.73) -- (270.56,14.67) ;
\draw    (272.23,13.73) -- (272.23,11.81) ;
\draw    (278.86,13.73) -- (278.86,11.8) ;
\draw    (167.43,14.53) -- (170.74,16.41) ;
\draw    (170.74,16.41) -- (174.06,14.53) ;
\draw    (174.06,14.53) -- (175.75,15.47) ;
\draw    (167.43,14.53) -- (165.76,15.47) ;
\draw    (167.43,14.53) -- (167.43,12.61) ;
\draw    (174.06,14.53) -- (174.06,12.6) ;
\draw    (141.43,14.93) -- (144.74,16.81) ;
\draw    (144.74,16.81) -- (148.06,14.93) ;
\draw    (148.06,14.93) -- (149.75,15.87) ;
\draw    (141.43,14.93) -- (139.76,15.87) ;
\draw    (141.43,14.93) -- (141.43,13.01) ;
\draw    (148.06,14.93) -- (148.06,13) ;
\draw    (167.63,149.03) -- (170.94,150.91) ;
\draw    (170.94,150.91) -- (174.26,149.03) ;
\draw    (174.26,149.03) -- (175.95,149.97) ;
\draw    (167.63,149.03) -- (165.96,149.97) ;
\draw    (167.63,149.03) -- (167.63,147.11) ;
\draw    (174.26,149.03) -- (174.26,147.1) ;
\draw    (246.63,148.63) -- (249.94,150.51) ;
\draw    (249.94,150.51) -- (253.26,148.63) ;
\draw    (253.26,148.63) -- (254.95,149.57) ;
\draw    (246.63,148.63) -- (244.96,149.57) ;
\draw    (246.63,148.63) -- (246.63,146.71) ;
\draw    (253.26,148.63) -- (253.26,146.7) ;
\end{tikzpicture}
	\end{center}
	\caption{A portion of a $2$-homogeneous tree}
\end{figure}

\par

	\subsection{The boundary of a homogeneous tree}\label{BDRY}

An \textit{infinite chain} is an infinite sequence $(x_i)_{i\in\mathbb{N}}$ of vertices of $X$ such that, for every $i\in\mathbb{N}$, $\:d(x_i,x_{i+1})=1$ and $x_i\neq x_{i+2}$. We denote by $c(X)$ the set of infinite chains on $X$. We say that two chains $(x_i)_{i\in\mathbb{N}}$ and $(y_i)_{i\in\mathbb{N}}$ are equivalent if there exist $m\in\mathbb{Z}$ and $N\in\mathbb{N}$ such that $x_i=y_{i+m}$ for every $i\geq N$ and, in such case, we write $(x_i)_{i\in\mathbb{N}}\sim (y_i)_{i\in\mathbb{N}}$. The \textit{boundary} of $X$ is the space $\Omega$ of equivalence classes $c(X)/\sim$. Observe that an infinite chain identifies uniquely a point of the boundary, which may be thought of as a point at infinity. In fact, it is well known \cite{cohen} that a homogeneous tree of even order $q+1$ can be isometrically embedded in the unit disc, the latter endowed with its hyperbolic metric, in such a way that the limit points of infinite chains correspond  precisely to the points of the unit circle, the topological boundary of the unit disc.

	\par
	We denote by $p$ the canonical projection of $c(X)$ onto $\Omega$. For $v\in X$ and $\omega\in\Omega$ we write $[v,\omega)$ for the unique chain $(x_i)_{i\in\mathbb{N}}$ starting at $v$, i.e.  $x_0=v$,  and ``pointing at'' the boundary point $\omega$, i.e. $p((x_i)_{i\in\mathbb{N}})=\omega$. Furthermore, given $\omega_1,\omega_2\in \Omega$ with $\omega_1\ne\omega_2$, we denote by $(\omega_1,\omega_2)$ the unique infinite sequence of vertices $(x_i)_{i\in\mathbb{Z}}$ such that $(x_{-i})_{i\in\mathbb{N}}\in\omega_1$ and $(x_i)_{i\in\mathbb{N}}\in\omega_2$, with $x_1\neq x_{-1}$, and we call it a doubly infinite chain. 
	We fix an arbitrary reference point $o\in X$. The boundary $\Omega$ is endowed with the topology (independent of the reference point) generated by the open sets
	\[\Omega(u)=\{\omega\in\Omega:u\in[o,\omega)\},\qquad u\in X.\]
	With this topology, $\Omega$ is a compact topological space.
	For later use, we remark that in every class $\omega\in\Omega$, there is a unique infinite chain $[o,\omega)$  starting at $o$, and we denote by
	\[
	\Gamma_o=\{[o,\omega):\omega\in\Omega\}.
	\]
	the set of all infinite chains starting at $o$. Clearly $\Gamma_o$ and $\Omega$ may be identified.

		\subsection{Horocycles}\label{HORO}

	A $2$-chain $[v,u]$ is said to be positively oriented with respect to  $\omega\in\Omega$ if $u\in[v,\omega)$, otherwise we say that $[v,u]$ is negatively oriented.

For $\omega\in\Omega$ and $v,u\in X$, we denote by $\kappa_\omega(v,u)\in\mathbb{Z}$  the so-called \textit{horocyclic index} of $v$ and $u$ w.r.t. $\omega$, namely  the number of positively oriented 2-chains (w.r.t. $\omega$) in $[v,u]$ minus the number of negatively oriented 2-chains (w.r.t. $\omega$) in $[v,u]$. Clearly, $|\kappa_{\omega}(v,u)|\leq d(v,u)$. It is easy to verify that,
for every $v,u,x\in X$ and for every $\omega\in\Omega$, 
\begin{equation}\label{eq:kprop}
	\kappa_\omega(v,x)=\kappa_\omega(v,u)+\kappa_\omega(u,x).
\end{equation}
Furthermore, we have the following result. 
\begin{prop}[\cite{cms}]\label{prop:limitk}
	Let $v\in X$ and $\omega\in\Omega$. If $[v,\omega)=(x_i)_{i\in\mathbb{N}}$, then for every $x\in X$
	\[\kappa_\omega(v,x)=\lim_{i\rightarrow\infty}(i-d(x,x_i)).\]
\end{prop}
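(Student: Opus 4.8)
The plan is to reduce the statement to the additive cocycle property \eqref{eq:kprop} together with one geometric observation about geodesics converging to $\omega$. First I would record that, since $[v,\omega)=(x_i)_{i\in\mathbb N}$ with $x_0=v$, every consecutive $2$-chain $[x_k,x_{k+1}]$ is positively oriented with respect to $\omega$, because by definition $x_{k+1}\in[x_k,\omega)$. Hence along the path $[v,x_i]=(x_0,\dots,x_i)$ all $i$ of the $2$-chains contribute $+1$ and none contribute $-1$, so $\kappa_\omega(v,x_i)=i$ (equivalently, iterate \eqref{eq:kprop} and use $\kappa_\omega(x_k,x_{k+1})=1$). Applying \eqref{eq:kprop} with the intermediate vertex $u=x_i$ then gives
\[
\kappa_\omega(v,x)=\kappa_\omega(v,x_i)+\kappa_\omega(x_i,x)=i+\kappa_\omega(x_i,x),
\]
so it suffices to prove that $\kappa_\omega(x_i,x)=-d(x,x_i)$ for all sufficiently large $i$.

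The geometric input is that the rays $[v,\omega)$ and $[x,\omega)$ share a common tail: since they represent the same boundary point, they coincide from some vertex on, so they meet at a first confluence vertex $c=x_m$. A short computation in the tree, using that the path $[x,c]$ is disjoint from the ray except at $c$, shows $d(x,x_i)=d(x,c)+|i-m|$ for every $i$; in particular $c$ is the nearest-point projection of $x$ onto the ray, and for $i\ge m$ the geodesic $[x_i,x]$ first backtracks from $x_i$ to $c$ along the ray and then runs from $c$ out to $x$.

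Next I would check that, when $i\ge m$, \emph{every} $2$-chain on $[x_i,x]$ is negatively oriented with respect to $\omega$. On the backtracking portion $x_i\to x_{i-1}\to\cdots\to x_m$ this is immediate, since $[x_k,\omega)=(x_k,x_{k+1},\dots)$ does not contain $x_{k-1}$. On the remaining portion $[c,x]$, which apart from $c$ is disjoint from the ray, the direction toward $\omega$ from any of its vertices points back toward $c$, so each step taken away from $c$ is again negatively oriented. Consequently $\kappa_\omega(x_i,x)=-d(x_i,x)=-d(x,x_i)$ for $i\ge m$, and substituting into the display above yields $i-d(x,x_i)=\kappa_\omega(v,x)$ for all $i\ge m$. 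The sequence $i-d(x,x_i)$ is therefore eventually constant and its limit equals $\kappa_\omega(v,x)$, which is the claim.

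The only genuinely delicate point is the confluence step: one must argue cleanly, using that $X$ is connected and loop-free, that two rays pointing to the same $\omega$ merge at a single vertex $c$ and that geodesics from $x$ to far-out points of the ray factor through $c$. Once this is in place, everything reduces to bookkeeping with \eqref{eq:kprop} and the sign convention for positively and negatively oriented $2$-chains, so I expect the confluence argument to be the main obstacle and the rest to be routine.
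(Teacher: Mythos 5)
Your argument is correct and essentially the same as the paper's: both hinge on the confluence vertex where $[v,\omega)$ and $[x,\omega)$ merge (your $c=x_m$, the paper's $x_N$) and on splitting the relevant geodesic into an all-positively-oriented and an all-negatively-oriented piece, which makes $i-d(x,x_i)$ eventually constant. The only cosmetic difference is that you route the computation through the cocycle identity \eqref{eq:kprop} at $x_i$, whereas the paper evaluates $\kappa_\omega(v,x)=d(v,x_N)-d(x,x_N)$ directly from the decomposition $[v,x]=[v,x_N]\cup[x_N,x]$.
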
      
\begin{proof}
	We fix $x\in X$ and we observe that
	\[\lim_{i\rightarrow\infty}(i-d(x,x_i))=\lim_{i\rightarrow\infty}(d(v,x_i)-d(x,x_i)).\]
	Since $[v,\omega)\sim [x,\omega)$, then there exists $N\in\mathbb{N}$ such that
	$x_N\in[x,\omega)$ and $x_{N-1}\not\in[x,\omega)$,
	with the understanding that if $v\in[x,\omega)$ then $N=0$. Thus, for all $i\geq N$
	\[d(v,x_i)-d(x,x_i)=d(v,x_N)-d(x,x_N)\]
	and then 
	\[\lim_{i\rightarrow\infty}(d(v,x_i)-d(x,x_i))=d(v,x_N)-d(x,x_N).\]
	Furthermore,  $[v,x]=[v,x_N]\cup[x_N,x]$, where $[v,x_N]$ is the union of positively oriented 2-chains and $[x_N,x]$ is the union of negatively oriented 2-chains. Hence, 
	\begin{align*}
		\kappa_\omega(v,x)&=d(v,x_N)-d(x,x_N)=\lim_{i\rightarrow\infty}(d(v,x_i)-d(x,x_i))=\lim_{i\rightarrow\infty}(i-d(x,x_i))
	\end{align*}
	and this concludes the proof. 
\end{proof}

We are now in a position to introduce the  horocycles. 
\begin{defn}
	For $\omega\in\Omega$, $v\in X$ and $n\in\mathbb{Z}$, the \textit{horocycle} tangent to $\omega$ of index $n$ with respect to the vertex $v$ is the subset of $X$ defined as 
	\[
	h_{\omega,n}^v=\{x\in X:\kappa_\omega(v,x)=n\}.
	\]
	We denote by $\Xi$ the set of horocycles.
\end{defn} 
\begin{figure}[h]
	\begin{center}
		\begin{tikzpicture}[scale=0.6]
			\draw (-6,0)--(-5.5,1);
			\draw (-5,0)--(-5.5,1);
			\draw (-4.5,0)--(-4,1);
			\draw (-3.5,0)--(-4,1);
			\draw (-3,0)--(-2.5,1);
			\draw (-2,0)--(-2.5,1);
			\draw (-1.5,0)--(-1,1);
			\draw (-0.5,0)--(-1,1);
			\draw (6,0)--(5.5,1);
			\draw (5,0)--(5.5,1);
			\draw (4.5,0)--(4,1);
			\draw (3.5,0)--(4,1);
			\draw (3,0)--(2.5,1);
			\draw (2,0)--(2.5,1);
			\draw (1.5,0)--(1,1);
			\draw (0.5,0)--(1,1);
			\draw (-5.5,1)--(-4.75,2);
			\draw (-4,1)--(-4.75,2);
			\draw (-2.5,1)--(-1.75,2);
			\draw (-1,1)--(-1.75,2);
			\draw (1,1)--(1.75,2);
			\draw (2.5,1)--(1.75,2);
			\draw (4,1)--(4.75,2);
			\draw (5.5,1)--(4.75,2);
			\draw (1.75,2)--(3.25,3);
			\draw (4.75,2)--(3.25,3);
			\draw (-1.75,2)--(-3.25,3);
			\draw (-4.75,2)--(-3.25,3);
			\draw (-3.25,3)--(0,4);
			\draw (3.25,3)--(0,4);
			\draw[dashed] (0,4)--(0,5);
			\draw[dashed,black] (-6.5,4)--(6.5,4);
			\draw[dashed,black] (-6.5,3)--(6.5,3);
			\draw[dashed,black] (-6.5,2)--(6.5,2);
			\draw[dashed,black] (-6.5,1)--(6.5,1);
			\draw[dashed,black] (-6.5,0)--(6.5,0);
			\node[anchor=west,black] at (6.5,3) { $h_{\omega,0}^v$};
			\node[anchor=west,black] at (6.5,2) { $h_{\omega,-1}^v$};
			\node[anchor=west,black] at (6.5,1) { $h_{\omega,-2}^v$};
			\node[anchor=west,black] at (6.5,0) { $h_{\omega,-3}^v$};
			\node[anchor=west,black] at (6.5,4) { $h_{\omega,1}^v$};
			\filldraw[black] (-3.25,3) circle (2pt) node[anchor=south east] {$v$};
			\draw[->,black] (0.2,4.5)--(0.2,5);
			\node[anchor=west,black] at (0.2,4.75) { $\omega$};
		\end{tikzpicture}
	\end{center}
	\caption{A part of a 2-homogeneous tree containing portions of horocycles (unions of vertices lying on dashed lines) which are tangent to $\omega$.}
\end{figure}
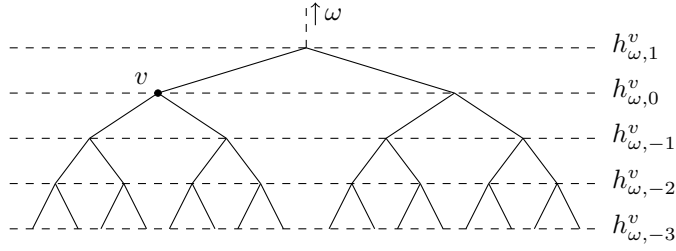
It follows immediately from \eqref{eq:kprop} that for every $v,u\in X$, $n\in\mathbb{Z}$ and 
$\omega\in\Omega$
\begin{equation}\label{eq:hprop}
	h_{\omega,n}^v=h_{\omega,n+\kappa_\omega(u,v)}^u.
\end{equation}
Hence the mapping $(v,\omega,n)\mapsto h_{\omega,n}^v$ is not injective and so $\Xi$ is not well parametrized by $X\times\Omega\times\mathbb{Z}$. However, for fixed $v\in X$, the map $(\omega,n)\mapsto h_{\omega,n}^v$ is actually bijective, so that $\Xi$ may be identified with $\Omega\times\mathbb{Z}$. Formally, for every $v\in X$, there is a bijection 
\begin{equation}\label{Psi}
	\Psi_v\colon\Omega\times\mathbb{Z}\to \Xi,\qquad 
\Psi_v(\omega,n)= h_{\omega,n}^v
\end{equation}
 and, for every fixed $\omega\in\Omega$, $X$ can be covered disjointly as
\begin{equation}\label{eq:disjointunion}
X=\bigcup_{n\in\mathbb{Z}}h^v_{\omega,n}.
\end{equation}

By equality \eqref{eq:hprop}, for each pair of vertices $u,\, v\in X$ 
\begin{equation*}
	\Psi_u^{-1}\circ\Psi_v(\omega,n)=(\omega,n+\kappa_{\omega}(u,v)).
\end{equation*}
Hence, every function $F$ on $\Xi$ 
satisfies the relation 
\begin{equation*}
	F\circ\Psi_v(\omega,n)=F\circ\Psi_u(\omega,n+\kappa_{\omega}(u,v)).
\end{equation*}
The topology that $\Xi$ inherits as  product of $\Omega$ and $\mathbb{Z}$ is proved to be independent of the choice of $v\in X$.

\subsection{Group actions}\label{GA}
\par
Let $G$ be the group of isometries on $X$, that is the group of bijections $g\colon X\to X$ which preserve the distance $d$. The group $G$ is unimodular and locally compact, and acts transitively on $X$ by the action
\[(g,x)\longmapsto g[x]:=g(x),\quad g\in G.\]
We fix an arbitrary reference point $o\in X$ and we denote by $K_o$ the corresponding stability subgroup. It turns out that  $K_o$ is a maximal compact subgroup of $G$ and  under the canonical bijection $g{K_o}\mapsto g[o]$ we have the identification $X\simeq G/{K_o}$ . 

The group $G$ acts on the boundary as well. Indeed, it is easy to see that if $g\in G$ and $(x_i)_{i\in\mathbb{N}}\sim(y_i)_{i\in\mathbb{N}}$, then $(g[x_i])_{i\in\mathbb{N}}\sim (g[y_i])_{i\in\mathbb{N}}$, so that  the transitive action of $G$ on $X$  induces a transitive action of $G$ on $\Omega$. Indeed, if $(x_i)_{i\in\mathbb{N}}\in c(X)$, then $(g[x_i])_{i\in\mathbb{N}}\in c(X)$ as well, because 
\[d(g[x_i],g[x_{i+1}])=d(x_i,x_{i+1})=1\]
and $g[x_i]\neq g[x_{i+2}]$ since $g\in G$. Furthermore, $(x_i)_{i\in\mathbb{N}}\sim(y_i)_{i\in\mathbb{N}}$ implies that there exist $m\in\mathbb{Z}$ and $N\in\mathbb{N}$ such that $d(g[x_i],g[y_{i+m}])=d(x_i,y_{i+m})=0$ for every $i\geq N$ and then $(g[x_i])_{i\in\mathbb{N}}\sim (g[y_i])_{i\in\mathbb{N}}$.
Precisely, the group  $G$ acts on $\Omega$ by the action 
\[
(g,\omega)\longmapsto g\cdot\omega:=p((g[x_i])_{i\in\mathbb{N}}),\qquad\omega=p((x_i)_{i\in\mathbb{N}}).
\]
This, in turn, induces a transitive action of $K_o$ on the set $\Gamma_o$ of infinite chains  starting at~$o$ by means of
\[
(k,[o,\omega))\longmapsto [o,k\cdot\omega),\quad \omega\in\Omega.
\]
We fix $\omega_0\in\Omega$ and we denote by $K_{o,\omega_0}$ the stabilizer of $[o,\omega_0)$ in $K_o$, so that   $\Gamma_o\simeq K_o/K_{o,\omega_0}$.

The group $G$ of isometries of $X$ acts transitively also on the space  $\Xi$ of horocycles through the action on vertices because the $G$-action maps horocycles in themselves. Indeed, if  $\xi\in\Xi$, $\xi=h_{\omega,n}^v$, with $v\in X$, $\omega\in\Omega$, $n\in\mathbb{N}$ and $[v,\omega)=(x_i)_{i\in\mathbb{N}}$, then for every $g\in G$
\begin{align}\label{eq:actionhorocycles}
	\nonumber g[\xi]=\{g[x]:x\in X,\, \kappa_{\omega}(v,x)=n\}&=\{g[x]: x\in X,\, \lim_{i\to\infty}(i-d(x,x_i))=n\}\\
	\nonumber &=\{x\in X: \lim_{i\to\infty}(i-d(g^{-1}[x],x_i))=n\}\\
	\nonumber &=\{x\in X: \lim_{i\to\infty}(i-d(x,g[x_i]))=n\}\\
	\nonumber &=\{x\in X: \kappa_{g{\cdot}\omega}(g[v],x)=n\}\\
	&=h_{g{\cdot}\omega,n}^{g[v]},
\end{align}
by Proposition~\ref{prop:limitk}.
Therefore $G$ acts transitively on $\Xi$ by 
\[
(g,h_{\omega,n}^{v})\longmapsto g.h_{\omega,n}^{v}:=h_{g{\cdot}\omega,n}^{g[v]}.
\]
Consider the horocycle \[\xi_0=h_{\omega_0,0}^o=\{x\in X: \kappa_{\omega_0}(o,x)=0\}.\] If $[o,\omega_0)=(x_i)_{i\in\mathbb{N}}$, then 
\begin{align*}
	g.\xi_0
	=\{x\in X: \lim_{i\to\infty}(i-d(x,g[x_i]))=0\}.
\end{align*}
Hence, the isotropy subgroup at $\xi_0$ is $H=\bigcup_{j=0}^{\infty}H_j$, where $H_j$ is  the subgroup of isometries fixing the sub-path  $[x_j,\omega_0)\in c(X)$, see also \cite{bfp}. Therefore, $\Xi\simeq G/H$. 
Observe that $H$ is the isotropy subgroup  of $G$ at $h_{\omega_0,n}^o$ for every $n\in\mathbb{Z}$. Thus, by \eqref{eq:hprop},  $H$ is the isotropy subgroup  of $G$ at every horocycle tangent to $\omega_0$, namely at $h_{\omega_0,n}^v$ for every $n\in\mathbb{Z}$ and $v\in X$.
\par
Let $\tau\in G$ be a one-step translation along $(\omega_1,\omega_0)$, with $\omega_1\in \Omega\setminus\{\omega_0\}$, where $\omega_0$ is as in the definition of $H$ (see \cite{ftn} for further details on the one-step translations in $G$). Assume that if $v\in (\omega_0,\omega_1)$, then $\tau(v)\in [v,\omega_0)$. Furthermore, denote by $A$ the subgroup of $G$ generated by the powers of $\tau$.       It is easy to see that the group $A$ acts on $H$ by conjugation. Indeed, for every $m\in\mathbb{Z}$ and $g\in H$, we have
\begin{align*}
	\tau^{m}g\tau^{-m}.\,\xi_0&=\tau^{m}g.\,h_{\omega_0,0}^{\tau^{-m}[o]}=\tau^{m}.\,h_{\omega_0,0}^{\tau^{-m}[o]}=h_{\omega_0,0}^{\tau^{m}\tau^{-m}[o]}=\xi_0,
\end{align*} 
where we use that $\tau^{m}\cdot\omega_0=\tau^{-m}\cdot\omega_0=\omega_0$.
It has been proved in \cite{veca} that the resulting semidirect product $H\rtimes A$ has modular function 
\[      \Delta(h,\tau^m)=q^m.\]
With slight abuse of notation, we write $\qdot$ for the function $\qdot\colon\mathbb{Z}\rightarrow\mathbb{{\mathbb R}_+}$ defined by
\[      \qdot(n):=q^\frac{n}{2},\]
and in what follows, the same notation is used for its trivial extension to $\Omega\times\mathbb{Z}$. The function $\Delta^\frac{1}{2}$ is the analogue of the function $e^\rho$ in the theory of symmetric spaces (see~\cite{gass}).
\\
\subsection{Measures}\label{M}
We endow $X$ with the counting measure $\de x$ which is trivially $G$-invariant, and we denote by $L^2(X)$ the Hilbert space of square-integrable functions with respect to  $\de x$.

As far as $\Omega$ is concerned, recall that $\Omega$ is identified with $\Gamma_o$ on which $K_o$ acts transitively. Therefore, $\Gamma_o$ admits a unique $K_o$-invariant probability measure $\mu^o$. We denote by $\nu^o$ the measure on $\Omega$ obtained as the  push-forward of $\mu^o$ by means of the canonical projection $p_{|_{\Gamma_o}}:\Gamma_o\to\Omega$.
It has been shown in \cite{ftn} that 
\[
\nu^o(\Omega(u))=\frac{q}{(q+1)q^{d(o,u)}},\qquad u\ne o.
\]
The measure $\nu^o$ is $G$-quasi-invariant    and, by definition, the Poisson kernel $p_o(g,\omega)$ is the associated Radon-Nikodym derivative $d\nu^o({g^{-1}}\cdot\omega)/d\nu^o(\omega)$, i.e.
\begin{equation}\label{eq:gvarinceomega}
	\int_{\Omega}F({g^{-1}}\cdot\omega)\de\nu^o(\omega)=\int_{\Omega}F(\omega)p_o(g^{-1},\omega)\de\nu^o(\omega),\qquad F\in L^1(\Omega,\nu^o),\, g\in G.
\end{equation}
It is possible to prove \cite{ftn} that 
\[
p_o(g,\omega)=q^{\kappa_\omega(o,g[o])}.
\]
Since $\nu^o$ is $K_o$-invariant, we may write $p_o(g{K_o},\omega)$ instead of $p_o(g,\omega)$. 
For every other choice of the reference vertex $v\in X$ the analogous objects $K_v, \Gamma_v, \mu^v, \nu^v, p_v$ can be introduced. It turns out that the measure $\nu^o$ is absolutely continuous with respect to $\nu^v$. Precisely 
\begin{equation}\label{verinv}
	\int_{\Omega}F(\omega)\de\nu^o(\omega)=\int_{\Omega}F(\omega)q^{\kappa_\omega(v,o)}\de\nu^v(\omega),
\end{equation}
for every $F\in L^1(\Omega,\nu^o)$.  Therefore, we can endow the boundary $\Omega$ with infinitely many measures which are absolutely continuous with respect to each other.

In order to adequately describe the measure on $\Xi$ relative to which we form the Lebesgue spaces $L^1(\Xi)$ and $L^2(\Xi)$, we need the parametrization \eqref{Psi}.
The idea is to define compatible measures on $\Omega\times\mathbb{Z}$ and $\Xi$ in the sense that the natural pull-back of functions induced by the mapping $\Psi_v\colon\Omega\times\mathbb{Z}\rightarrow\Xi$ induces  a unitary operator $\Psi_v^*$ of the corresponding $L^2$ spaces. To this end,
we consider the measure on $\mathbb{Z}$ with density $q^n$ with respect to the counting measure $\de n$. We fix $v\in X$ and  endow $\Xi$ with the measure $\lambda$ obtained as the push-forward  of the measure $\nu^v\otimes q^n\de n$ on $\Omega\times\mathbb{Z}$ by means of the map $\Psi_v$, i.e.
\[
\lambda={\Psi_v}_*(\nu^v\otimes q^n\de n),
\]
which is independent of the choice of the vertex $v$ (see \cite{bfp}). We denote by $L^1(\Xi)$ and $L^2(\Xi)$ the spaces of absolutely integrable functions and square-integrable functions with respect to $\lambda$, respectively. Thus, by definition of $\lambda$, for every $F\in L^1(\Xi)$
\begin{align*}
	\int_{\Xi} F(\xi)\de\lambda(\xi)&
	=\int_{\Omega\times\mathbb{Z}}(F\circ\Psi_v)(\omega,n)q^n\de\nu^v(\omega)\de n.
\end{align*}
It is easy to verify that $\lambda$ is $G$-invariant.   

\par    
For every $v\in X$, let $L_v^2\left(\Omega\times\mathbb{Z}\right)$ be the space of square-integrable functions w.r.t. the measure $\nu^v\otimes{\rm d}n$. For every $F\in L^2(\Xi)$, we denote by $\Psi_v^*F$ the $(L^2(\Xi),L_v^2\left(\Omega\times\mathbb{Z}\right))$-pull-back
of $F$ by $\Psi_v$, which involves the function $\Delta^\frac{1}{2}$ introduced in the previous subsection, namely
\[
\Psi_v^*F(\omega,n)=(\Delta^\frac{1}{2}\cdot(F\circ\Psi_{v}))(\omega,n),
\]
for almost every $(\omega,n)\in \Omega\times\mathbb{Z}$. Clearly, $\Psi_v^*$ is a unitary operator from $L^2(\Xi) $ into $L_v^2\left(\Omega\times\mathbb{Z}\right)$. Indeed, for every $F\in L^2(\Xi)$ we have that
\begin{align*}
	\int_{\Omega\times\mathbb{Z}}\left|\Psi_v^*F(\omega,n)\right|^2\de \nu^{v}(\omega)\de n
	=&\int_{\Omega\times\mathbb{Z}}\left|(\Delta^\frac{1}{2}\cdot(F\circ\Psi_{v}))(\omega,n)\right|^2\de \nu^v(\omega)\de n\\
	=&\int_{\Omega\times\mathbb{Z}}\left|(F\circ\Psi_{v})(\omega,n)\right|^2q^n\de \nu^v(\omega)\de n\\
	=&\int_{\Xi}\left|F(\xi)\right|^2\de\lambda(\xi)=\|F\|_{ L^2(\Xi)}^2
\end{align*}
and then $\Psi_v^*$ is an isometry from $L^2(\Xi) $ into $L_v^2\left(\Omega\times\mathbb{Z}\right)$. Surjectivity is also clear.

\subsection{Representations}\label{R}
Recall that $X$ is endowed with the counting measure $\de x$ which is trivially $G$-invariant. Thus, the group $G$  acts on $L^2(X)$ by the quasi regular representation $\pi\colon G\longrightarrow \mathcal{U}(\Le^2(X))$ defined by
\[\pi(g)f(x):=f(g^{-1}[x]),\qquad f\in\Le^2(X),\:g\in G,\]
where $\mathcal{U}(\Le^2(X))$ denotes the group of unitary operators of $\Le^2(X)$. In Appendix \ref{appendiceirri} it is shown that $\pi$ is not irreducible.  

Similarly, since $\lambda$ is $G$-invariant, the group $G$  acts on $L^2(\Xi)$ by the quasi regular unitary representation $\hat{\pi}\colon G\longrightarrow \mathcal{U}(\Le^2(\Xi))$ defined by
\[
\hat{\pi}(g)F(\xi):=F(g^{-1}.\xi),\qquad F\in\Le^2(\Xi),\:g\in G.
\]
These are the two representations in which we are interested.

	\subsection{The Helgason-Fourier transform on homogeneous trees}\label{FT}

		The Helgason-Fourier transform can be defined on homogeneous trees (see \cite{cms}, \cite{ftn}, \cite{ftp}) in analogy with the setup of symmetric spaces \cite{gass}. We briefly recall its definition and its main features. We put $T=2\pi/\log(q)$,
		$\mathbb{T}=\mathbb{R}/T\mathbb{Z}\simeq[0,T)$ and we denote by $\de t$ the normalized Lebesgue measure on $\mathbb{T}$. Let $C_c(X)$ be the space of compactly supported functions on $X$.
		\begin{defn}
			The \textit{Helgason-Fourier transform} of $f\in C_c(X)$ with respect to the vertex $v\in X$ is the function $\mathcal{H}_vf:\Omega\times\mathbb{T}\longrightarrow\mathbb{C}$ defined by
		\[	\mathcal{H}_vf(\omega,t)=\sum_{x\in X}f(x)q^{(\frac{1}{2}+it)\kappa_\omega(v,x)},\qquad(\omega,t)\in\Omega\times\mathbb{T}.\]
		\end{defn}

		As the Euclidean Fourier transform, the Helgason-Fourier transform extends to a unitary operator on $\Le^2(X)$ (see \cite{ftn}, \cite{ftp}). The Plancherel measure involves a version of the Harish-Chandra $\textbf{c}$-function inspired by the symmetric space construction \cite{hc58}, namely the meromorphic function
	\[	\textbf{c}(z)=\frac{1}{q^\frac{1}{2}+q^{-\frac{1}{2}}}\frac{q^{1-z}-q^{z-1}}{q^{\frac{1}{2}-z}-q^{z-\frac{1}{2}}},\qquad z\in\mathbb{C}\textrm{ with }q^{2z-1}\neq 1.\]
	
		We put
	\begin{equation}\label{cq}
		c_q=\frac{q}{2(q+1)}
	\end{equation}
	and we denote by $L_{v,\textbf{c}}^2(\Omega\times\mathbb{T})$ the space of square-integrable functions on $\Omega\times\mathbb{T}$ w.r.t. the measure $c_q\left|\textbf{c}(1/2+it)\right|^{-2}\de \nu^v\de t$.

	\textbf{Property ${\mathbf{\sharp}}$.} We say that $f\in L_{v,\textbf{c}}^2(\Omega\times\mathbb{T})$ satisfies Property $\sharp$ if the symmetry condition
			\begin{equation}\label{invhf}
		\int_{\Omega}p_v(x,\omega)^{\frac{1}{2}-it}F(\omega,t)\de\nu^v(\omega)=\int_{\Omega}p_v(x,\omega)^{\frac{1}{2}+it}F(\omega,-t)\de\nu^v(\omega),
	\end{equation}
	holds for every $x\in X$ and for almost every $t\in\mathbb{T}$. We denote by $L_{v,\textbf{c}}^2(\Omega\times\mathbb{T})^\sharp$ the space of functions in $L_{v,\textbf{c}}^2(\Omega\times\mathbb{T})$ satisfying Property $\sharp$.

%
		\begin{thm}[\cite{ftn}]\label{extH}
		The Helgason-Fourier transform  $\mathcal{H}_v$ extends to a unitary isomorphism $\mathscr{H}_v$ from $L^2(X)$ onto $L_{v,\emph{\textbf{c}}}^2\left(\Omega\times\mathbb{T}\right)^\sharp$.
		\end{thm}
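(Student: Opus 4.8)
The plan is to reduce the assertion to the Plancherel theorem for the spherical (Harish-Chandra) transform on radial functions, which is classical on homogeneous trees, and then to bootstrap to the full Helgason-Fourier transform by exploiting the transitivity of the $K_v$-action on $\Omega$ together with a polarization and density argument. Throughout I would work first on $C_c(X)$, which is dense in $L^2(X)$.

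First I would record the link with spherical functions. For $z=\frac12+it$ the spherical function
\[
\phi_t(x)=\int_{\Omega}q^{(\frac12+it)\kappa_\omega(v,x)}\,\de\nu^v(\omega)
\]
depends only on $d(v,x)$, since $\nu^v$ is $K_v$-invariant and $K_v$ acts transitively on the spheres centred at $v$. Hence, for a radial $f\in C_c(X)$ the transform $\mathcal{H}_vf(\omega,t)$ is independent of $\omega$ and equals the spherical transform $\widetilde f(t)=\sum_{x\in X}f(x)\phi_t(x)$; this is the base case. I would then invoke the spherical Plancherel theorem (see \cite{ftn}): up to the constant $c_q$ of \eqref{cq} and the density $|\textbf{c}(\frac12+it)|^{-2}$, the map $f\mapsto\widetilde f$ is an isometry from radial $L^2(X)$ onto the even functions of $L^2(\mathbb{T})$, and it is invertible. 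This furnishes both the norm identity and the inversion formula in the radial case.

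Next I would establish the inversion formula for arbitrary $f\in C_c(X)$, namely
\[
f(x)=c_q\int_{\mathbb{T}}\int_{\Omega}\mathcal{H}_vf(\omega,t)\,q^{(\frac12-it)\kappa_\omega(v,x)}\,|\textbf{c}(\tfrac12+it)|^{-2}\,\de\nu^v(\omega)\,\de t,
\]
by averaging the radial inversion over $K_v$ and using the disintegration \eqref{eq:disjointunion} of $X$ into horocycles. The Plancherel identity $\|f\|_{L^2(X)}^2=c_q\iint|\mathcal{H}_vf|^2|\textbf{c}(\frac12+it)|^{-2}\de\nu^v\de t$ then follows by pairing $f$ against itself, applying Fubini and recognising the definition of $\mathcal{H}_v$. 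Consequently $\mathcal{H}_v$ extends to an isometry $\mathscr{H}_v$ of $L^2(X)$ into $L^2_{v,\textbf{c}}(\Omega\times\mathbb{T})$.

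It remains to identify the range with the $\sharp$-subspace. I would first check that every $\mathcal{H}_vf$ satisfies Property $\sharp$: the symmetry relation \eqref{invhf} encodes the functional equation that links the two plane waves $q^{(\frac12\pm it)\kappa_\omega(v,x)}$ through the boundary intertwining operator whose kernel is controlled by $\textbf{c}$, so that integrating $\mathcal{H}_vf$ against the Poisson kernel yields the same quantity at $t$ and at $-t$. For surjectivity, given $F$ in the $\sharp$-subspace I would define $f$ by the inversion integral above and verify $\mathscr{H}_vf=F$, the condition \eqref{invhf} being precisely what guarantees that this candidate preimage is consistent and accounts for the whole range; density of $C_c(X)$ then completes the extension. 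The main obstacle is this last identification of the image: the isometry is comparatively mechanical once the radial Plancherel theorem and the inversion formula are available, whereas proving that Property $\sharp$ characterises the range exactly---that the transform always satisfies it and that no further constraint is needed---is the delicate point, and it is exactly the reflection of the $\textbf{c}$-function's functional equation.
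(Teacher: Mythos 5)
The paper does not actually prove Theorem~\ref{extH}: it is imported as a known result from \cite{ftn} (see also \cite{cms}), so there is no in-paper argument to compare against. Measured against the standard proof in those references, your sketch has the right architecture --- spherical Plancherel theorem for radial functions, inversion by radialization, polarization for the isometry, and the evenness of the spherical functions for Property~$\sharp$ --- but two steps are not right as written.

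First, averaging over $K_v$ produces the radialization of $f$ about $v$, and the radial inversion applied to it recovers only the single value $f(v)$; it cannot yield $f(x)$ for $x\neq v$. To obtain your displayed inversion formula you must radialize about the point $x$ itself (average over $K_x$), apply the radial inversion centred at $x$, and then transport back to the base vertex $v$ using $\mathcal{H}_xf(\omega,t)=q^{(\frac12+it)\kappa_\omega(x,v)}\mathcal{H}_vf(\omega,t)$ together with the change of measure \eqref{verinv}; this is exactly where the kernel $p_v(x,\omega)^{\frac12-it}=q^{(\frac12-it)\kappa_\omega(v,x)}$ comes from. The decomposition \eqref{eq:disjointunion} into horocycles plays no role in this step; the relevant decomposition is into spheres about $x$. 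Second, surjectivity onto $L^2_{v,\mathbf{c}}(\Omega\times\mathbb{T})^\sharp$ is the substantive half of the theorem, and your proposal reduces it to ``define $f$ by the inversion integral and verify $\mathscr{H}_vf=F$'': for a general $F$ in the $\sharp$-subspace it is not clear that this integral defines an element of $L^2(X)$, nor that its transform returns $F$ --- that is precisely what must be proved, and Property~$\sharp$ enters only through a genuine argument, either the wave-packet analysis of \cite{ftn} or a density argument (if $F$ in the $\sharp$-space is orthogonal to every $\mathcal{H}_vf$ with $f\in C_c(X)$, test against the point masses $\delta_x$ and use \eqref{invhf} to conclude $F=0$). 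You correctly flag this as the delicate point, but flagging it is not closing it.
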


	\section{The horocyclic Radon transform}	
	In this section we recall the definition of the horocyclic Radon transform on homogeneous trees and its fundamental properties. 
	As already mentioned, the horocyclic Radon transform is precisely the Radon transform {\it \`a la} Helgason relative to the dual pair $(X,\Xi)$.
	The case of homogeneous trees is not covered by the general setup considered in \cite{abdd} by the authors since the quasi regular representation $\pi$ of $G$ on $L^2(X)$ is not irreducible. For this reason, we can not apply the 
	results presented in \cite{abdd} in order to obtain a unitarization theorem and we therefore adopt an approach which mimics the one used in \cite{radon} and 
	\cite{acha} in the case of the polar and the affine Radon transforms, respectively. 
	\begin{defn}
		The horocyclic Radon transform $\mathcal{R}f$ of a function $f\in C_c(X)$ is the map $\mathcal{R}f:\Xi\to\mathbb{C}$ defined by 
		\[
		\mathcal{R}f(\xi)=\sum_{x\in\xi} f(x).
		\]
	\end{defn}
	We recall that for every $v\in X$ there exists a bijection $\Psi_v\colon\Omega\times\mathbb{Z}\to \Xi$ given by
	$(\omega,n)\mapsto h_{\omega,n}^v$ and we shall write $\mathcal{R}_vf=\mathcal{R}f\circ\Psi_v$.

\begin{defn}
	Let $v\in X$. The Abel transform $\mathcal{A}_vf$ of a function $f\in C_c(X)$ is the map $\mathcal{A}_vf:\Omega\times\mathbb{Z}\to\mathbb{C}$ defined by 
		\[
		\mathcal{A}_vf(\omega,n)=\Psi_v^*(\mathcal{R}f)(\omega,n)=(\Delta^\frac{1}{2}\cdot\mathcal{R}_vf)(\omega,n).
		\]
	\end{defn}
 
 We need to introduce the Fourier transform on $L^2(\mathbb{Z})$. We denote by $L^2_T$ the space of $T$-periodic functions $f$ on $\mathbb{R}$ such that 
 \[
\|f\|_{L^2_T}^2=\int_0^T|f(t)|^2 {\rm d}t<+\infty. 
 \] 
Let $s\in L^2(\mathbb{Z})$, the Fourier transform $\mathcal{F}s$ of $s$ is defined as the Fourier series of the $T$-periodic function with Fourier coefficients $(s(n))_{n\in\mathbb{Z}}$. Precisely, 	
\[	\mathcal{F}s=\sum_{n\in\mathbb{Z}}s(n)q^{in\cdot},\]
where the series converges in $L^2_T$. The Parseval identity reads
	\begin{equation*}
	\|\mathcal{F}s\|_{L^2_T}^2=\sum_{n\in\mathbb{Z}}|s(n)|^2.
	\end{equation*}
Furthermore, if $s\in L^1(\mathbb{Z})$, for almost every $t\in\mathbb{T}$
\[	\mathcal{F}s(t)=\sum_{n\in\mathbb{Z}}s(n)q^{int}.\]

We are now ready to state the result which relates the Helgason-Fourier transform with the horocyclic Radon transform. For the reader's convenience, we include the proof. 
	\begin{prop}[Fourier Slice Theorem, version I, \cite{betori1986radon,cms}]\label{fst}
		Let $v\in X$. For every $f\in C_c(X)$ and $\omega\in\Omega$, $\mathcal{A}_vf(\omega,\cdot)\in L^1(\mathbb{Z})$ and 
\begin{equation}\label{eq:fst}
	(I\otimes\mathcal{F})\mathcal{A}_vf(\omega,t)= \mathcal{H}_vf(\omega,t),
\end{equation}
for almost every $t\in\mathbb{T}$.
	\end{prop}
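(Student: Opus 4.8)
The plan is to unwind every definition and reduce the asserted identity \eqref{eq:fst} to a single rearrangement of a finite sum, the legitimacy of which rests on the disjoint decomposition \eqref{eq:disjointunion} together with the compact support of $f$.

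First I would settle the integrability claim. Since $f\in C_c(X)$ and $X$ carries the counting measure, $f$ is supported on a finite set $S\subset X$. Because $|\kappa_\omega(v,x)|\le d(v,x)$ for every $x$, the set $\{\kappa_\omega(v,x):x\in S\}$ is finite, so $\mathcal{R}_vf(\omega,n)=\sum_{x\in h_{\omega,n}^v}f(x)$ vanishes for all but finitely many $n\in\mathbb{Z}$. Multiplying by $\Delta^{\frac12}(n)=q^{n/2}$ preserves finite support, hence $\mathcal{A}_vf(\omega,\cdot)$ is finitely supported and in particular lies in $L^1(\mathbb{Z})$. This is precisely the hypothesis under which the pointwise formula $\mathcal{F}s(t)=\sum_{n\in\mathbb{Z}}s(n)q^{int}$ is valid, so $(I\otimes\mathcal{F})\mathcal{A}_vf(\omega,\cdot)$ is given by that (here finite) series.

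Next I would compute directly. Using the definition of $\mathcal{A}_vf$ and then of $\mathcal{R}_vf$,
\[
(I\otimes\mathcal{F})\mathcal{A}_vf(\omega,t)=\sum_{n\in\mathbb{Z}}q^{n/2}\,\mathcal{R}_vf(\omega,n)\,q^{int}=\sum_{n\in\mathbb{Z}}q^{(\frac12+it)n}\sum_{x\in h_{\omega,n}^v}f(x).
\]
Since $f$ has finite support this is a finite double sum, which may therefore be freely rearranged. By \eqref{eq:disjointunion} the horocycles $(h_{\omega,n}^v)_{n\in\mathbb{Z}}$ partition $X$, and on $h_{\omega,n}^v$ one has $\kappa_\omega(v,x)=n$ by definition; hence the double sum collapses to a single sum over $x\in X$ carrying the exponent $q^{(\frac12+it)\kappa_\omega(v,x)}$. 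This is exactly $\mathcal{H}_vf(\omega,t)$, which establishes \eqref{eq:fst} for almost every (indeed every) $t\in\mathbb{T}$.

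The argument carries no genuine analytic difficulty: it is a finite rearrangement wrapped around the definitions. The only point that truly uses the structure of the problem — and hence the place where I would be most careful — is the reduction to finitely many nonzero indices $n$, which relies on the bound $|\kappa_\omega(v,x)|\le d(v,x)$ combined with the finiteness of the support of $f$. This is simultaneously what guarantees the $L^1(\mathbb{Z})$ statement and what licenses the interchange of the two summations.
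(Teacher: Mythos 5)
Your proposal is correct and follows essentially the same route as the paper: establish summability of $\mathcal{A}_vf(\omega,\cdot)$ from the finite support of $f$ together with the disjoint decomposition \eqref{eq:disjointunion}, then collapse the double sum $\sum_n\sum_{x\in h_{\omega,n}^v}$ into a single sum over $X$ using $\kappa_\omega(v,x)=n$ on $h_{\omega,n}^v$. The only (harmless) difference is that you observe $\mathcal{A}_vf(\omega,\cdot)$ is actually finitely supported, whereas the paper merely bounds its $\ell^1$ norm by $\sum_{x\in\mathrm{supp}f}|f(x)|q^{\kappa_\omega(v,x)/2}$.
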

	\begin{proof}
	Let $f\in C_c(X)$ and $\omega\in\Omega$. By formula \eqref{eq:disjointunion} 
	\[
	\sum_{n\in\mathbb{Z}}|\mathcal{A}_vf(\omega,n)|=\sum_{n\in\mathbb{Z}}q^\frac{n}{2}|\mathcal{R}_vf(\omega,n)|\leq\sum_{n\in\mathbb{Z}}q^\frac{n}{2}\sum_{x\in h_{\omega,n}^v}|f(x)|=\sum_{x\in \mathrm{supp}f}|f(x)|q^\frac{\kappa_\omega(v,x)}{2}<+\infty.
	\]
	Then, $\mathcal{A}_vf(\omega,\cdot)$ is in $L^1(\mathbb{Z})$ and applying again \eqref{eq:disjointunion} we have that for almost every $t\in\mathbb{T}$
		\begin{align*}
		(I\otimes\mathcal{F})\mathcal{A}_vf(\omega,t)&=\sum_{n\in \mathbb{Z}}\mathcal{A}_vf(\omega,n)q^{itn}=\sum_{n\in \mathbb{Z}}q^\frac{n}{2}\mathcal{R}_vf(\omega,n)q^{itn}\\
		&=\sum_{n\in \mathbb{Z}}q^\frac{n}{2}\!\sum_{x\in h_{\omega,n}^v}\!f(x)q^{itn}
		=\sum_{x\in X}f(x)q^{(\frac{1}{2}+it)\kappa_\omega(v,x)}=\mathcal{H}_vf(\omega,t),
		\end{align*}
and this concludes the proof.
	\end{proof}
We refer to Proposition~\ref{fst} as the Fourier Slice Theorem for the horocyclic Radon transform in analogy with the polar Radon transform, see \cite{radon} as a classical reference.

In order to prove our first intertwining result, we show that $\mathcal{R}f\in L^2(\Xi)$ for every  $f\in C_c(X)$.
Let $f\in C_c(X)$ and $v\in X$. By Parseval identity and Proposition~\ref{fst} we have that 
\begin{align*}
\int_{\Xi}|\mathcal{R}f(\xi)|^2\de\lambda(\xi)&=\int_{\Omega\times\mathbb{Z}}|\Psi_v^*(\mathcal{R}f)(\omega,n)|^2\de\nu^v(\omega)\de n\\
&=\int_{\Omega\times\mathbb{T}}|(I\otimes\mathcal{F})(\Psi_v^*(\mathcal{R}f))(\omega,t)|^2\de\nu^v(\omega)\de t\\
&=\int_{\Omega\times\mathbb{T}}|\mathcal{H}_vf(\omega,t)|^2\de\nu^v(\omega)\de t.
\end{align*}
Since $f$ has finite support, then by the definition of the Helgason-Fourier transform, the inequality $|\kappa_{\omega}(v,x)|\leq d(v,x)$ and $\nu^v(\Omega)=1$, the above leads to 
 \begin{align*}
\int_{\Xi}|\mathcal{R}f(\xi)|^2\de\lambda(\xi)&=\int_{\Omega\times\mathbb{T}}|\sum_{x\in\text{supp}f}f(x)q^{(\frac{1}{2}+it)\kappa_\omega(v,x)}|^2\de\nu^v(\omega)\de t\\
&\leq\int_{\Omega}(\sum_{x\in\text{supp}f}|f(x)|q^{\frac{\kappa_\omega(v,x)}{2}})^2\de\nu^v(\omega)\\
&\leq\int_{\Omega}\sum_{x\in\text{supp}f}|f(x)|^2\sum_{x\in\text{supp}f}q^{\kappa_\omega(v,x)}\de\nu^v(\omega)\\
&=\sum_{x\in\text{supp}f}|f(x)|^2\sum_{x\in\text{supp}f}\int_{\Omega}q^{\kappa_\omega(v,x)}\de\nu^v(\omega)\\
&\leq\sum_{x\in\text{supp}f}|f(x)|^2\sum_{x\in\text{supp}f}\int_{\Omega}q^{d(v,x)}\de\nu^v(\omega)\\
&=\sum_{x\in\text{supp}f}|f(x)|^2\sum_{x\in\text{supp}f}q^{d(v,x)}<+\infty.
\end{align*}
Therefore, $\mathcal{R}f\in L^2(\Xi)$ for every $f\in C_c(X)$.
	The horocyclic Radon transform intertwines the regular representations of $G$ on $L^2(X)$ and $L^2(\Xi)$. This result is a direct
	consequence of the fact that $X$ and $\Xi$ carry $G$-invariant measures ${\rm d} x$ and ${\rm d}\lambda$.
	
	\begin{prop}\label{interradon}
	For every $g\in G$ and $f\in C_c(X)$
	\[\mathcal{R}(\pi(g)f)=\hat{\pi}(g)(\mathcal{R}f).\]
\end{prop}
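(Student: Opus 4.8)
The plan is to verify the identity pointwise on $\Xi$ by a direct computation, relying on two ingredients: the defining formulae for the two quasi regular representations $\pi$ and $\hat\pi$, and—crucially—the identification established in~\eqref{eq:actionhorocycles} that the $G$-action on a horocycle $\xi$, regarded as a subset of $X$, is nothing but its pointwise image $g.\xi=\{g[x]:x\in\xi\}$. Since $f\in C_c(X)$ is finitely supported, every sum that occurs below has only finitely many nonzero terms, so all manipulations are those of finite sums and no convergence subtleties arise.

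First I would fix $g\in G$, $f\in C_c(X)$ and an arbitrary horocycle $\xi\in\Xi$, and unwind the left-hand side using the definitions of $\mathcal{R}$ and $\pi$:
\[
\mathcal{R}(\pi(g)f)(\xi)=\sum_{x\in\xi}(\pi(g)f)(x)=\sum_{x\in\xi}f(g^{-1}[x]).
\]
Next I would perform the change of summation variable $y=g^{-1}[x]$, equivalently $x=g[y]$. Because $g$ is a bijection of $X$, this is a bijection of the index set. The constraint $x\in\xi$ becomes $g[y]\in\xi$, that is $y\in g^{-1}.\xi$, since by~\eqref{eq:actionhorocycles} the preimage of $\xi$ under the vertex map $x\mapsto g[x]$ coincides with the horocycle $g^{-1}.\xi$. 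Hence the sum rewrites as
\[
\sum_{y\in g^{-1}.\xi}f(y)=\mathcal{R}f(g^{-1}.\xi)=\hat{\pi}(g)(\mathcal{R}f)(\xi),
\]
where the final equality is exactly the definition of $\hat\pi$. As $\xi$ was arbitrary, this yields $\mathcal{R}(\pi(g)f)=\hat{\pi}(g)(\mathcal{R}f)$.

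I do not expect any genuine obstacle here; the proposition is essentially a formal consequence of the $G$-equivariance of the geometry. The only point requiring care is the compatibility between the vertex action and the horocycle action: one must invoke~\eqref{eq:actionhorocycles} to guarantee that re-indexing the summation by $y=g^{-1}[x]$ sends the sum over $\xi$ precisely to the sum over $g^{-1}.\xi$, rather than over some other set. Everything else is bookkeeping with finite sums.
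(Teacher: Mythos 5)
Your proof is correct and follows exactly the paper's argument: both unwind the definitions and re-index the sum via $y=g^{-1}[x]$, using the compatibility of the vertex action with the horocycle action established in~\eqref{eq:actionhorocycles}. No further comment is needed.
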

\begin{proof}
For all $g\in G$ and $f\in C_c(X)$ 
	\begin{equation*}\label{intercc}
		\mathcal{R}(\pi(g)f)(\xi)=\sum_{x\in\xi}f(g^{-1}[x])=\sum_{y\in g^{-1}.\xi}f(y)=\hat{\pi}(g)(\mathcal{R}f)(\xi),
	\end{equation*}
	for every $\xi\in\Xi$.
\end{proof}

We now introduce a closed subspace of $L^2(\Xi)$ which will play a crucial role because it is the range of the unitarization 
of the horocyclic Radon transform. 

Let $v\in X$. For every $F\in L^2(\Xi)$
\begin{align*}\label{eq:firstl2zfunction}
\|F\|^2_{L^2(\Xi)}&=\int_{\Omega}\sum_{n\in\mathbb{Z}} |\Psi_v^*F(\omega,n)|^2{\rm d}\nu^v(\omega)<+\infty.
\end{align*}
Hence, the function $\Psi_v^*F(\omega,\cdot)$ is in $L^2(\mathbb{Z})$ for almost every $\omega\in\Omega$. Moreover, by Parseval identity and Fubini theorem 
\begin{align*}
\|F\|^2_{L^2(\Xi)}&=\int_{\Omega\times\mathbb{Z}}|\Psi_v^*F(\omega,n)|^2{\rm d}\nu^v(\omega)\de n\\
&=\int_{\mathbb{T}}\int_{\Omega} |(I\otimes\mathcal{F})\Psi_v^*F(\omega,t)|^2{\rm d}\nu^v(\omega)\de t<+\infty.
\end{align*}
Then, for almost every $t\in\mathbb{T}$ the function $(I\otimes\mathcal{F})\Psi_v^*F(\cdot,t)$ is in $L^2(\Omega,\nu^v)
$
and
\[
|\int_{\Omega}(I\otimes\mathcal{F})\Psi_v^*F(\omega,t)\de\nu^v(\omega)|\leq\int_{\Omega}|(I\otimes\mathcal{F})\Psi_v^*F(\omega,t)|\de\nu^v(\omega)<+\infty.
\]

\textbf{Property ${\mathbf{\flat}}$.} We say that $F\in L^2(\Xi)$ satisfies Property $\flat$ if the symmetry condition 
\begin{equation}\label{eq:equivalentfrequencycondition}
	\int_{\Omega}(I\otimes\mathcal{F})\Psi_v^*F(\omega,t)\de\nu^v(\omega)=\int_{\Omega}(I\otimes\mathcal{F})\Psi_v^*F(\omega,-t)\de\nu^v(\omega)
\end{equation}
holds for every $v\in X$ and for almost every $t\in\mathbb{T}$. We denote by $L^2_{\flat}(\Xi)$ the space of all such functions. 

Our main results in Section~\ref{sec:three} are based on the following characterization of $L_\flat^2(\Xi)$. For every $v\in X$, we denote 
by $L_v^2(\Omega\times\mathbb{T})$ the space of square-integrable functions on $\Omega\times\mathbb{T}$ w.r.t. the measure $\nu^v\otimes{\rm d} t$.

\begin{prop}\label{prop:fundamentaloperator}
	Let $v\in X$. The operator $\Phi_v$ defined on $F\in L^2(\Xi)$ by 
	\begin{equation*}
	\Phi_vF(\omega,t)=(I\otimes\mathcal{F})\Psi_v^*F(\omega,t)=(I\otimes\mathcal{F})(\Delta^\frac{1}{2}\cdot(F\circ\Psi_{v}))(\omega,t),\qquad \text{a.e.}\, (\omega,t)\in \Omega\times\mathbb{T},
	\end{equation*}
	is an isometry from $L^2(\Xi)$ into $L_v^2\left(\Omega\times\mathbb{T}\right)$. Furthermore, 
	 for every other $u\in X$ 
	\begin{equation}\label{eq:relationthetas}
	\Phi_u F(\omega,t)=p_u(v,\omega)^{\frac{1}{2}+it}\Phi_v F(\omega,t),
	\end{equation}
	for almost every $(\omega,t)\in \Omega\times\mathbb{T}$.
Finally, a function $F$ belongs to $L_\flat^2(\Xi)$
	 if and only if $\Phi_v F$ satisfies Property $\sharp$.
	\end{prop}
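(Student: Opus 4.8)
The plan is to realize $\Phi_v$ as a composition of maps already known to be isometric, and then to reduce both the transformation rule and the characterization of $L^2_\flat(\Xi)$ to algebraic identities for the horocyclic index together with the change-of-reference formulas recorded in the Preliminaries.

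For the first assertion I would simply observe that $\Phi_v=(I\otimes\mathcal{F})\circ\Psi_v^*$. The operator $\Psi_v^*$ has already been shown to be unitary from $L^2(\Xi)$ onto $L^2_v(\Omega\times\mathbb{Z})$, while $I\otimes\mathcal{F}$ acts only in the $\mathbb{Z}$-variable and, by the Parseval identity combined with Tonelli's theorem, is an isometry from $L^2_v(\Omega\times\mathbb{Z})$ into $L^2_v(\Omega\times\mathbb{T})$. Composing the two yields the claim, with no genuine difficulty.

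For the transformation rule \eqref{eq:relationthetas}, I would expand $\Phi_v F(\omega,t)=\mathcal{F}[\Psi_v^*F(\omega,\cdot)](t)=\sum_{n}q^{n/2}(F\circ\Psi_v)(\omega,n)q^{int}$ and insert the identity $(F\circ\Psi_v)(\omega,n)=(F\circ\Psi_u)(\omega,n+\kappa_\omega(u,v))$ recorded just after \eqref{Psi}. Writing $k=\kappa_\omega(u,v)$, the relation $\Psi_v^*F(\omega,n)=q^{-k/2}\Psi_u^*F(\omega,n+k)$ exhibits $\Psi_v^*F(\omega,\cdot)$ as the shift by $k$, scaled by $q^{-k/2}$, of $\Psi_u^*F(\omega,\cdot)$; since a shift by $k$ corresponds on the Fourier side to multiplication by $q^{-ikt}$, one gets $\Phi_v F(\omega,t)=q^{-(1/2+it)k}\Phi_u F(\omega,t)$. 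Recalling $p_u(v,\omega)=q^{\kappa_\omega(u,v)}$, this is exactly \eqref{eq:relationthetas}. All manipulations occur in $\ell^2(\mathbb{Z})$ for almost every $\omega$, so no convergence issue arises.

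The heart of the matter is the final equivalence, and this is where I expect the delicate bookkeeping. Fixing $v$, I would rewrite Property $\flat$, which is imposed for every reference $u\in X$, in terms of the single function $\Phi_v F$. Two ingredients feed in: the transformation rule just proved, $\Phi_u F=p_u(v,\omega)^{1/2+it}\Phi_v F$, and the change-of-reference formula \eqref{verinv} in the form $\de\nu^u(\omega)=q^{\kappa_\omega(v,u)}\de\nu^v(\omega)=p_u(v,\omega)^{-1}\de\nu^v(\omega)$, where I use $\kappa_\omega(v,u)=-\kappa_\omega(u,v)$ from \eqref{eq:kprop}. Substituting both into $\int_\Omega\Phi_u F(\omega,t)\de\nu^u(\omega)=\int_\Omega\Phi_u F(\omega,-t)\de\nu^u(\omega)$ collapses the Poisson factors to $p_u(v,\omega)^{-1/2+it}$ on the left and $p_u(v,\omega)^{-1/2-it}$ on the right. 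Finally I would invoke $p_u(v,\omega)=p_v(u,\omega)^{-1}$ (again from $\kappa_\omega(u,v)=-\kappa_\omega(v,u)$) to rewrite these as $p_v(u,\omega)^{1/2-it}$ and $p_v(u,\omega)^{1/2+it}$, so that the identity for reference $u$ becomes precisely Property $\sharp$ for $\Phi_v F$ evaluated at $x=u$. Since each step is reversible and $u$ ranges over all of $X$ exactly as $x$ does in Property $\sharp$, the two properties are equivalent. The main obstacle is ensuring that the three exponent contributions — the $1/2+it$ from the transformation rule, the $-1$ from the Radon--Nikodym factor, and the inversion $p_u(v,\cdot)=p_v(u,\cdot)^{-1}$ — combine to reproduce exactly the $1/2\mp it$ powers of Property $\sharp$, with the roles of $t$ and $-t$ correctly matched.
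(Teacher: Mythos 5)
Your proposal is correct and follows essentially the same route as the paper: the isometry via Parseval's identity, the transformation rule \eqref{eq:relationthetas} via the shift identity $\Psi_v^*F(\omega,n)=q^{-\kappa_\omega(u,v)/2}\Psi_u^*F(\omega,n+\kappa_\omega(u,v))$ read on the Fourier side, and the equivalence of Properties $\flat$ and $\sharp$ by combining \eqref{eq:relationthetas} with the change-of-measure formula $\de\nu^u=p_v(u,\cdot)\,\de\nu^v$. The only cosmetic difference is that you run the last computation from Property $\flat$ at reference $u$ toward Property $\sharp$ at $x=u$, whereas the paper goes the other way; the exponent bookkeeping you flag as the main obstacle works out exactly as you describe.
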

By Proposition~\ref{prop:fundamentaloperator}, $F\in L_\flat^2(\Xi)$ implies that $\Phi_vF$ satisfies~\eqref{invhf} for every $v\in X$. Conversely, if 
we want to prove that a function $F\in L^2(\Xi)$ satisfies \eqref{eq:equivalentfrequencycondition} it is enough to verify that \eqref{invhf} holds true for at least one, hence every, $v\in X$. This last remark 
will prove very useful in our proofs.
\begin{proof} By Parseval identity, for every $F\in L^2(\Xi)$ we have that 
	\begin{align*}
	\int_{\Omega\times\mathbb{T}}\left|\Phi_vF(\omega,t)\right|^2\de \nu^{v}(\omega)\de t=&\int_\Omega\int_\mathbb{T}\left|(I\otimes\mathcal{F})\Psi_v^*F(\omega,t)\right|^2\de t\de \nu^{v}(\omega)\\
	=&\int_{\Omega\times\mathbb{Z}}\left|\Psi_v^*F(\omega,n)\right|^2\de \nu^v(\omega)\de n=\|F\|_{ L^2(\Xi)}^2,
	\end{align*}
	so that $\Phi_v$ is an isometry from $L^2(\Xi)$ into $L_v^2\left(\Omega\times\mathbb{T}\right)$. Now, let $u\in X$ and $F\in L^2(\Xi)$. 
	For almost every $\omega\in\Omega$ we have that
	\begin{align*}
	0&=\lim_{N\to+\infty}\int_0^T|\sum_{n=-N}^NF\circ\Psi_{u}(\omega,n)q^{\frac{n}{2}}q^{int}-\Phi_uF(\omega,t)|^2\de t\\
	&=\lim_{N\to+\infty}\int_0^T|\sum_{n=-N}^NF\circ\Psi_{v}(\omega,n+\kappa_{\omega}(v,u))q^{\frac{n}{2}}q^{int}-\Phi_uF(\omega,t)|^2\de t\\
	&=\lim_{N\to+\infty}\int_0^T|\sum_{m=-N+\kappa_{\omega}(v,u)}^{N+\kappa_{\omega}(v,u)}F\circ\Psi_{v}(\omega,m)q^{\frac{1}{2}(m-\kappa_{\omega}(v,u))}q^{it(m-\kappa_{\omega}(v,u))}-\Phi_uF(\omega,t)|^2\de t\\
	&=\lim_{N\to+\infty}\int_0^T|q^{(\frac{1}{2}+it)\kappa_{\omega}(u,v)}\sum_{m=-N+\kappa_{\omega}(v,u)}^{N+\kappa_{\omega}(v,u)}F\circ\Psi_{v}(\omega,m)q^{\frac{m}{2}}q^{imt}-\Phi_uF(\omega,t)|^2\de t
	\end{align*}
	and, since 
	\[
	\Phi_vF(\omega,t)=\lim_{N\to+\infty} \sum_{m=-N+\kappa_{\omega}(v,u)}^{N+\kappa_{\omega}(v,u)}F\circ\Psi_{v}(\omega,m)q^{\frac{m}{2}}q^{imt}
	\]
	in $L_T^2$, we conclude that relation~\eqref{eq:relationthetas} holds true.
	Finally, let  $F\in L^2(\Xi)$. For every $x\in X$ and for almost every $t\in\mathbb{T}$, \eqref{eq:relationthetas} yields
	\begin{align*}
	\int_{\Omega}p_{v}(x,\omega)^{\frac{1}{2}-it}\Phi_vF(\omega,t)\de\nu^{v}(\omega)&=\int_{\Omega}p_{v}(x,\omega)^{\frac{1}{2}-it}p_v(x,\omega)^{\frac{1}{2}+it}\Phi_xF(\omega,t)\de\nu^{v}(\omega)\\
	&=\int_{\Omega}\Phi_xF(\omega,t)p_v(x,\omega)\de\nu^{v}(\omega)\\
	&=\int_{\Omega}\Phi_xF(\omega,t)\de\nu^{x}(\omega).
	\end{align*}
	Then, for every $x\in X$ and almost every $t\in\mathbb{T}$ 
	\[
	\int_{\Omega}p_{v}(x,\omega)^{\frac{1}{2}-it}\Phi_vF(\omega,t)\de\nu^{v}(\omega)=\int_{\Omega}(I\otimes\mathcal{F})\Psi_x^*F(\omega,t)\de\nu^x(\omega).
	\]
This equality allows us to conclude that $F\in L_\flat^2(\Xi)$ if and only if $\Phi_vF$ satisfies~\eqref{invhf}
	and this concludes our proof.

\end{proof}	
\begin{cor}\label{cor:radonbemolle}
For every $f\in C_c(X)$, 
\begin{equation}\label{eq:phivradonhilbert}
\Phi_v(\mathcal{R}f)=\mathcal{H}_v f
\end{equation}
in $L_v^2(\Omega\times\mathbb{T})$ and $\mathcal{R}f\in L^2_\flat(\Xi)$.
\end{cor}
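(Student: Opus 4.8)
The plan is to prove Corollary~\ref{cor:radonbemolle} as a direct consequence of the Fourier Slice Theorem (Proposition~\ref{fst}) together with the characterization of $L_\flat^2(\Xi)$ established in Proposition~\ref{prop:fundamentaloperator}. First I would establish the identity~\eqref{eq:phivradonhilbert}. By definition, $\Phi_v(\mathcal{R}f) = (I\otimes\mathcal{F})\Psi_v^*(\mathcal{R}f)$, and recalling that $\Psi_v^*(\mathcal{R}f) = \mathcal{A}_v f$ is the Abel transform, this is exactly $(I\otimes\mathcal{F})\mathcal{A}_v f$. Proposition~\ref{fst} tells us that $(I\otimes\mathcal{F})\mathcal{A}_v f(\omega,t) = \mathcal{H}_v f(\omega,t)$ for almost every $t\in\mathbb{T}$, so the identity follows immediately. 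I should note that we have already verified $\mathcal{R}f\in L^2(\Xi)$ for $f\in C_c(X)$ in the discussion preceding Proposition~\ref{interradon}, so $\Phi_v(\mathcal{R}f)$ is a well-defined element of $L_v^2(\Omega\times\mathbb{T})$ and the equality holds in that space.

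Next I would prove that $\mathcal{R}f\in L_\flat^2(\Xi)$. By the final assertion of Proposition~\ref{prop:fundamentaloperator}, a function $F\in L^2(\Xi)$ lies in $L_\flat^2(\Xi)$ if and only if $\Phi_v F$ satisfies Property $\sharp$, and by the remark following that proposition it suffices to verify this for a single vertex $v$. Applying this to $F=\mathcal{R}f$ and using the identity~\eqref{eq:phivradonhilbert} just established, the task reduces to showing that $\mathcal{H}_v f$ satisfies Property $\sharp$, namely the symmetry condition~\eqref{invhf}. But this is precisely the content of Theorem~\ref{extH}: the Helgason-Fourier transform $\mathcal{H}_v$ maps $L^2(X)$ (and in particular $C_c(X)$) into $L_{v,\textbf{c}}^2(\Omega\times\mathbb{T})^\sharp$, so $\mathcal{H}_v f$ satisfies Property $\sharp$ by construction. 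Hence $\Phi_v(\mathcal{R}f)=\mathcal{H}_v f$ satisfies~\eqref{invhf}, and the characterization in Proposition~\ref{prop:fundamentaloperator} gives $\mathcal{R}f\in L_\flat^2(\Xi)$.

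The entire argument is essentially a bookkeeping exercise, chaining together three results already in hand: the Fourier Slice Theorem identifies $\Phi_v\mathcal{R}$ with $\mathcal{H}_v$, the known mapping property of $\mathcal{H}_v$ (Theorem~\ref{extH}) guarantees Property $\sharp$, and the equivalence in Proposition~\ref{prop:fundamentaloperator} translates Property $\sharp$ into membership in $L_\flat^2(\Xi)$. There is no genuine obstacle here; the only point requiring mild care is making sure the almost-everywhere identity from Proposition~\ref{fst} is interpreted correctly as an equality in the $L^2$ sense, so that substituting $\mathcal{H}_v f$ for $\Phi_v(\mathcal{R}f)$ inside the integral defining Property $\sharp$ is legitimate. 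Since both sides of~\eqref{eq:phivradonhilbert} are honest $L_v^2(\Omega\times\mathbb{T})$ functions agreeing almost everywhere, this substitution is unproblematic, and the corollary follows.
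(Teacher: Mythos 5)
Your proposal is correct and follows essentially the same route as the paper: the identity \eqref{eq:phivradonhilbert} is read off from the Fourier Slice Theorem (Proposition~\ref{fst}) via $\Phi_v(\mathcal{R}f)=(I\otimes\mathcal{F})\mathcal{A}_vf$, and membership in $L^2_\flat(\Xi)$ follows because $\mathcal{H}_vf$ satisfies Property $\sharp$ combined with the characterization in Proposition~\ref{prop:fundamentaloperator}. You merely spell out the bookkeeping that the paper compresses into one line.
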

\begin{proof}
The proof follows immediately by Proposition~\ref{fst} and the fact that the Helgason-Fourier transform satisfies \eqref{invhf}.
\end{proof}

	Some comments are in order. Proposition~\ref{prop:fundamentaloperator} with Corollary~\ref{cor:radonbemolle} show that $\mathcal{R}(C_c(X))\subseteq L^2_\flat(\Xi)$ 
	and it highlights the link between the range of the Radon transform with the range of the Helgason-Fourier transform, which will play a crucial role in our main result.  
	The range $\mathcal{R}(C_c(X))$ has already been completely characterized in \cite{ctcc}. We 
	recall the result in \cite{ctcc} for completeness and in order to understand the relation with $L_{\flat}^2(\Xi)$. 
	\begin{thm}[Theorem 1, \cite{ctcc}]\label{thm:rangecharacterization}
		The range of the horocyclic Radon transform on the space of functions with finite support on $X$ is the space of continuous compactly supported functions on $\Xi$ satisfying the following two conditions
		\begin{enumerate}
			\item for some $v\in X$, hence for every $v\in X$,   $\sum\limits_{n\in\mathbb{Z}}F\circ\Psi_v(\omega,n)$ is independent of $\omega\in\Omega$;
			\item for every $v\in X$ and $n\in\mathbb{Z}$
			\begin{equation}\label{eq:radonconditionintime}
			\int_{\Omega}\Psi_v^*F(\omega,n)\de\nu^v(\omega)=\int_{\Omega}\Psi_v^*F(\omega,-n)\de\nu^v(\omega).
			\end{equation}
		\end{enumerate} 
	\end{thm}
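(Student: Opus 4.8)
The plan is to prove the two inclusions separately. Throughout I fix a reference vertex $v\in X$ and work with the isometry $\Phi_v$ of Proposition~\ref{prop:fundamentaloperator}, exploiting the identity $\Phi_v(\mathcal{R}f)=\mathcal{H}_vf$ of Corollary~\ref{cor:radonbemolle}, which transports the whole problem to the image of the Helgason--Fourier transform. The necessity of the stated conditions follows almost formally from the results already available in the excerpt, whereas the sufficiency is the substantial half and rests on a Paley--Wiener type statement, which I expect to be the main obstacle.

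\emph{Necessity.} Suppose $F=\mathcal{R}f$ with $f\in C_c(X)$. For fixed $x,v$ the index $\kappa_\omega(v,x)$ is locally constant in $\omega$ (it is determined by an initial segment of $[v,\omega)$) and $|\kappa_\omega(v,x)|\le d(v,x)$; since $\operatorname{supp}f$ is finite, $\mathcal{R}_vf(\omega,n)=\sum_{\kappa_\omega(v,x)=n}f(x)$ is continuous in $\omega$ and supported in $\{n\in\mathbb{Z}:|n|\le\max_{x\in\operatorname{supp}f}d(v,x)\}$, so $F$ is continuous and compactly supported on $\Xi$. Condition~(1) is immediate from the disjoint decomposition~\eqref{eq:disjointunion}: $\sum_{n}F\circ\Psi_v(\omega,n)=\sum_{x\in X}f(x)$, which does not depend on $\omega$. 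For condition~(2), Corollary~\ref{cor:radonbemolle} gives $\mathcal{R}f\in L^2_\flat(\Xi)$; writing $a(n)=\int_\Omega\Psi_v^*F(\omega,n)\,\de\nu^v(\omega)$, Property~$\flat$ in the form~\eqref{eq:equivalentfrequencycondition} reads $\sum_n a(n)q^{int}=\sum_n a(-n)q^{int}$ in $L^2_T$, and comparing Fourier coefficients yields $a(n)=a(-n)$, which is exactly~\eqref{eq:radonconditionintime}.

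\emph{Sufficiency.} Let $F$ be continuous and compactly supported on $\Xi$ and satisfy~(1) and~(2). Then $F\in L^2(\Xi)$ and, for each $\omega$, $\Psi_v^*F(\omega,\cdot)$ is finitely supported in a fixed range of $n$ independent of $\omega$; hence $\Phi_vF(\omega,t)=\sum_n\Psi_v^*F(\omega,n)q^{int}$ is a trigonometric polynomial in $t$ with coefficients continuous in $\omega$. By the Fourier-coefficient computation above, condition~(2) is equivalent to Property~$\flat$, so by Proposition~\ref{prop:fundamentaloperator} the symbol $\Phi_vF$ satisfies the symmetry~\eqref{invhf}, i.e. Property~$\sharp$. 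Because $\mathbf{c}(1/2+it)$ has no zeros on the critical line (its numerator $q^{1/2-it}-q^{-1/2+it}$ has terms of distinct moduli $q^{1/2}\ne q^{-1/2}$ and hence never vanishes, the only degeneracies of $\mathbf{c}$ being poles), the weight $|\mathbf{c}(1/2+it)|^{-2}$ is bounded; as $\nu^v(\Omega)=1$ and $\mathbb{T}$ is compact, the bounded symbol $\Phi_vF$ lies in $L_{v,\mathbf{c}}^2(\Omega\times\mathbb{T})^\sharp$. By Theorem~\ref{extH} I may therefore set $f:=\mathscr{H}_v^{-1}(\Phi_vF)\in L^2(X)$, so that $\mathscr{H}_vf=\Phi_vF$.

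It remains to prove that $f$ has finite support; this is the crux and the least formal step. Since $\Phi_vF$ is a trigonometric polynomial of fixed degree $N$ (the bound on the $n$-support of $\Psi_v^*F$) whose coefficients, being continuous functions on the compact totally disconnected space $\Omega$, are uniformly locally constant, I would invoke a Paley--Wiener theorem for the Helgason--Fourier transform on trees to conclude that $f$ is supported in $\{x\in X:d(v,x)\le M\}$ for some $M$; here condition~(1), which by analytic continuation says precisely that $\Phi_vF(\omega,i/2)=\sum_nF\circ\Psi_v(\omega,n)$ is independent of $\omega$ (matching $\mathcal{H}_vf(\omega,i/2)=\sum_x f(x)$), supplies the regularity at the spherical point needed for the band-limited symbol to come from a compactly supported function. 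Granting $f\in C_c(X)$, Corollary~\ref{cor:radonbemolle} gives $\Phi_v(\mathcal{R}f)=\mathcal{H}_vf=\Phi_vF$, and since $\Phi_v$ is an isometry, hence injective, $\mathcal{R}f=F$ in $L^2(\Xi)$, thus $\lambda$-almost everywhere; as both functions are continuous, $\mathcal{R}f=F$ everywhere. The hardest implication is thus the Paley--Wiener passage ``band-limited symbol with uniformly locally constant coefficients $\Rightarrow$ finite support''; an alternative would be to bypass the Fourier picture and invert $\mathcal{R}$ combinatorially along the horocyclic foliation, but the support control would still have to be extracted from conditions~(1) and~(2).
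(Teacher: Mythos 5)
The paper does not actually prove this statement: it is quoted from \cite{ctcc} (Theorem~1 there) purely to situate $L^2_\flat(\Xi)$ with respect to the known range characterization, so there is no internal proof to compare yours against. Judged on its own, your necessity half is fine: local constancy of $\omega\mapsto\kappa_\omega(v,x)$ for each fixed $x$ gives continuity and compact support of $\mathcal{R}f$, the decomposition \eqref{eq:disjointunion} gives condition (i), and extracting Fourier coefficients from Property $\flat$ (available by Corollary~\ref{cor:radonbemolle}) gives condition (ii).

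The sufficiency half, which you yourself flag as the crux, has a genuine gap, in two respects. First, the decisive step is delegated to an unstated ``Paley--Wiener theorem for the Helgason--Fourier transform on trees'': you neither formulate it nor verify its hypotheses, and condition (i) enters only through the heuristic remark about the value at $t=i/2$, whereas it must do real work --- without it the theorem is false, so any correct argument has to use (i) in a checkable way. Second, the one concrete assertion you make to set up that step is wrong: a continuous function on the compact, totally disconnected space $\Omega$ need \emph{not} be locally constant (it is only a uniform limit of locally constant functions). Since $\mathcal{A}_vf(\omega,n)$ for finitely supported $f$ \emph{is} locally constant, with radius of constancy controlled by $\mathrm{supp}\,f$, the local constancy of the coefficients $\Psi_v^*F(\omega,\cdot)$ is itself part of what must be extracted from continuity together with (i) and (ii); your argument silently assumes one of the hardest parts of the conclusion. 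The reduction to the Helgason--Fourier picture via $\Phi_v$ and Proposition~\ref{prop:fundamentaloperator} is a sensible strategy, and the construction $f=\mathscr{H}_v^{-1}(\Phi_vF)$ is legitimate (the weight $|\mathbf{c}(1/2+it)|^{-2}$ is indeed bounded on the critical line), but as written the passage from ``band-limited symbol'' to ``finitely supported $f$'' is a sketch resting on a false lemma rather than a proof; one would have to either prove the relevant tree Paley--Wiener theorem, including the role of (i), or follow the combinatorial inversion route of \cite{ctcc}.
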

It is worth observing that condition \eqref{eq:equivalentfrequencycondition} is the equivalent on the frequency side of equation \eqref{eq:radonconditionintime} for continuous compactly supported functions on $\Xi$. 
As it will be made clear in the next section, condition \eqref{eq:equivalentfrequencycondition} better suits our needs.

\section{Unitarization and Intertwining}\label{sec:three}
In order to obtain the unitarization for the horocyclic Radon transform that we are after, we need some technicalities. Figure~\ref{fig:diagram} might help the reader to keep track of all the spaces and operators involved in our construction.
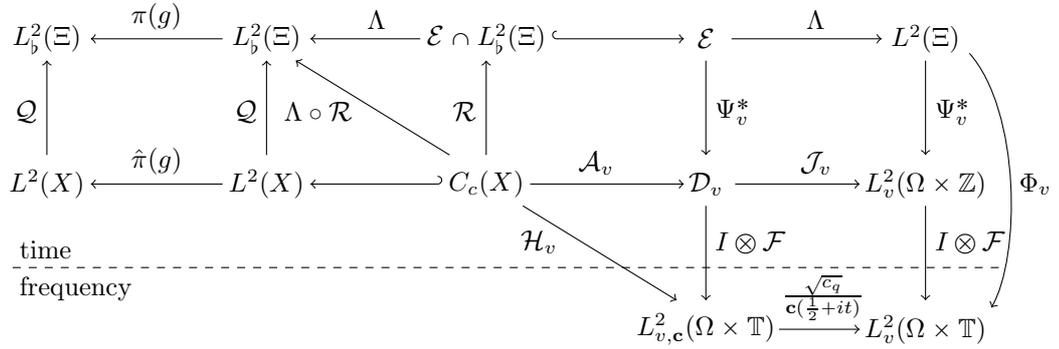
\begin{figure}[h]
	\begin{center}
		\begin{tikzpicture}[scale=0.963]
		\node at (0,0) {$L^2(\Xi)$};
		\node at (-3,0) {$\mathcal{E}$};
		\draw[<-] (-0.6,0)--(-2.65,0);
		\draw[<-] (-0.9,-2)--(-2.6,-2);
		\draw[<-] (-0.9,-4)--(-2,-4);
		\draw[->] (-3,-0.3)--(-3,-1.6);
		\node at (-3,-2) {$\mathcal{D}_v$};
		\draw[->] (0,-2.3)--(0,-3.6);
		\draw[->] (0,-0.3)--(0,-1.6);
		\draw[->] (-3,-2.3)--(-3,-3.6);
		\node at (-3,-4) {$L_{v,\mathbf{c}}^2(\Omega\times\mathbb{T})$};
		\node at (-3,-1) [anchor=west]{$\Psi_v^*$};
		\node at (-3,-2.8) [anchor=west]{$I\otimes\mathcal{F}$};
		\node at (0,-2) {$L_{v}^2(\Omega\times\mathbb{Z})$};
		\node at (1.5,-2) {$\Phi_v$};
		\node at (0,-4) {$L_{v}^2(\Omega\times\mathbb{T})$};
		\node at (-1.5,0) [anchor=south]{$\Lambda$};
		\node at (-1.5,-2) [anchor=south]{$\mathcal{J}_v$};
		\node at (-1.4,-4) [anchor=south]{$\frac{\sqrt{c_q}}{\mathbf{c}(\frac{1}{2}+it)}$};
		\node at (0,-2.8) [anchor=west]{$I\otimes\mathcal{F}$};
		\node at (0,-1) [anchor=west]{$\Psi_v^*$};
		\node at (-6,-1) [anchor=east]{$\mathcal{R}$};
		\node at (-9,-1) [anchor=east]{$\mathcal{Q}$};
		\node at (-9,-2) {$L^2(X)$};
		\draw[<-] (-9,-0.3)--(-9,-1.6);
		\node at (-6,-2) {$C_c(X)$};
		\draw[<-] (-6,-0.3)--(-6,-1.6);
		\draw[left hook->] (-6.6,-2)--(-8.4,-2);
		\draw[<-] (-8.6,-0.3)--(-6.5,-1.6);
		\node at (-7.7,-1) [anchor=east]{$\Lambda\circ\mathcal{R}$};
		\node at (-9,0) {$L_\flat^2(\Xi)$};
		\node at (-6,0) {$\mathcal{E}\cap L_\flat^2(\Xi)$};
		\draw[->] (-6.9,0)--(-8.4,0);
		\node at (-7.5,0) [anchor=south]{$\Lambda$};
		\draw[right hook->] (-5.1,0)--(-3.3,0);
		\draw[<-] (-3.4,-3.6)--(-5.5,-2.3);
		\node at (-4.9,-2.8) [anchor=east]{$\mathcal{H}_v$};
		\draw[->] (-5.4,-2)--(-3.3,-2);
		\node at (-4.5,-2) [anchor=south]{$\mathcal{A}_v$};
		\draw[->] (0.6,-0.2)..controls(1.3,-1)and(1.3,-3)..(0.9,-3.7);
		\draw[dashed] (1,-3.15)--(-12.5,-3.15);
		\node at (-12.5,-3.15) [anchor=south west] {time};
		\node at (-12.5,-3.15) [anchor=north west] {frequency};
		\node at (-12,0) {$L_\flat^2(\Xi)$};
		\node at (-12,-2) {$L^2(X)$};
		\draw[->] (-9.6,0)--(-11.4,0);
		\draw[->] (-9.6,-2)--(-11.4,-2);
		\draw[->] (-12,-1.6)--(-12,-0.3);
		\node at (-10.5,0) [anchor=south]{$\pi(g)$};
		\node at (-10.5,-2) [anchor=south]{$\hat{\pi}(g)$};
		\node at (-12,-1) [anchor=east]{$\mathcal{Q}$};
		\end{tikzpicture}
	\end{center}
	\caption{Spaces and operators that come into play in our construction.}\label{fig:diagram}
\end{figure}

 Let $v\in X$. We set
\[\mathcal{D}_v=\{\varphi\in L_v^2(\Omega\times\mathbb{Z}):(I\otimes\mathcal{F})\varphi\in L_{v,\emph{\textbf{c}}}^2(\Omega\times\mathbb{T})\}\]
and we define the operator $\mathcal{J}_v\colon\mathcal{D}_v\subseteq L_v^2(\Omega\times\mathbb{Z})\rightarrow L_v^2(\Omega\times\mathbb{Z})$ as the Fourier multiplier
\[(I\otimes\mathcal{F})(\mathcal{J}_v\varphi)(\omega,t)=\frac{\sqrt{c_q}}{\left|\textbf{c}(\frac{1}{2}+it)\right|}(I\otimes\mathcal{F})\varphi(\omega,t),\quad \text{a.e.}\, (\omega,t)\in\Omega\times\mathbb{T}, \]
where $c_q$ is given by \eqref{cq}. We define the set of functions 
\[
\mathcal{E}=\{F\in L^2(\Xi):\Phi_vF\in L_{v,\emph{\textbf{c}}}^2(\Omega\times\mathbb{T}) \}
\] 
and we consider the operator $\Lambda\colon\mathcal{E}\subseteq L^2(\Xi)\rightarrow L^2(\Xi)$ given by
\[
\Lambda F={\Psi_v^*}^{-1}\mathcal{J}_v\Psi_v^*F.
\]
\begin{lem}\label{invlam}
	The operator $\Lambda$ is independent of the choice of $v\in X$.	\end{lem}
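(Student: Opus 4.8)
The plan is to leverage the two facts about the family $\{\Phi_v\}_{v\in X}$ proved in Proposition~\ref{prop:fundamentaloperator}: each $\Phi_v$ is an isometry, hence injective, from $L^2(\Xi)$ into $L_v^2(\Omega\times\mathbb{T})$, and two base vertices are linked by the pointwise relation \eqref{eq:relationthetas}. Write $\Lambda_v F={\Psi_v^*}^{-1}\mathcal{J}_v\Psi_v^*F$ to make the dependence on $v$ explicit. Since $\Phi_v$ is injective, to prove $\Lambda_u F=\Lambda_v F$ it suffices to show that $\Phi_v(\Lambda_uF)=\Phi_v(\Lambda_vF)$, and this I intend to obtain by a direct computation using only \eqref{eq:relationthetas} and the definition of $\mathcal{J}_v$. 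Before that, one must check that the domain $\mathcal{E}$ itself does not depend on the vertex used to define it, so that $\Lambda_v F$ is simultaneously meaningful for every $v$ whenever $F\in\mathcal{E}$.

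For the domain, I would start from \eqref{eq:relationthetas}, which gives $\lvert\Phi_uF(\omega,t)\rvert^2=p_u(v,\omega)\lvert\Phi_vF(\omega,t)\rvert^2$ because the factor $p_u(v,\omega)^{it}$ has modulus one. Combining this with the change of variables $\de\nu^u(\omega)=p_v(u,\omega)\,\de\nu^v(\omega)$ (the content of \eqref{verinv}) and the cancellation $p_u(v,\omega)\,p_v(u,\omega)=q^{\kappa_\omega(u,v)+\kappa_\omega(v,u)}=1$, which follows from \eqref{eq:kprop} applied with $x=v$, one finds
\[
\int_{\Omega\times\mathbb{T}}\lvert\Phi_uF\rvert^2\,c_q\lvert\mathbf{c}(\tfrac12+it)\rvert^{-2}\,\de\nu^u\,\de t=\int_{\Omega\times\mathbb{T}}\lvert\Phi_vF\rvert^2\,c_q\lvert\mathbf{c}(\tfrac12+it)\rvert^{-2}\,\de\nu^v\,\de t.
\]
Thus $\Phi_uF\in L_{u,\mathbf{c}}^2(\Omega\times\mathbb{T})$ if and only if $\Phi_vF\in L_{v,\mathbf{c}}^2(\Omega\times\mathbb{T})$, so $\mathcal{E}$ is intrinsic and, for $F\in\mathcal{E}$, every $\Lambda_vF$ is well defined and lies in $L^2(\Xi)$.

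For the value, unwinding the definitions and using that $\mathcal{J}_v$ is the Fourier multiplier by $\sqrt{c_q}/\lvert\mathbf{c}(\tfrac12+it)\rvert$ gives, for $F\in\mathcal{E}$,
\[
\Phi_v(\Lambda_vF)=(I\otimes\mathcal{F})\mathcal{J}_v\Psi_v^*F=\frac{\sqrt{c_q}}{\lvert\mathbf{c}(\tfrac12+it)\rvert}\,\Phi_vF.
\]
The same computation performed at $u$ yields $\Phi_u(\Lambda_uF)=\frac{\sqrt{c_q}}{\lvert\mathbf{c}(\tfrac12+it)\rvert}\Phi_uF$, and then applying \eqref{eq:relationthetas} (with the roles of $u$ and $v$ exchanged) to the function $\Lambda_uF\in L^2(\Xi)$ gives
\[
\Phi_v(\Lambda_uF)=p_v(u,\omega)^{\frac12+it}\Phi_u(\Lambda_uF)=\frac{\sqrt{c_q}}{\lvert\mathbf{c}(\tfrac12+it)\rvert}\,p_v(u,\omega)^{\frac12+it}\Phi_uF=\frac{\sqrt{c_q}}{\lvert\mathbf{c}(\tfrac12+it)\rvert}\,\Phi_vF,
\]
where the last equality is \eqref{eq:relationthetas} once more. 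Hence $\Phi_v(\Lambda_uF)=\Phi_v(\Lambda_vF)$, and injectivity of $\Phi_v$ forces $\Lambda_uF=\Lambda_vF$.

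The substance of the argument — everything else being bookkeeping — is that the multiplier defining $\mathcal{J}_v$ depends only on the frequency variable $t$ and is therefore unaffected by the factor $p_v(u,\omega)^{\frac12+it}$ that intertwines $\Phi_v$ and $\Phi_u$; this is precisely what makes the two expressions for $\Phi_v(\Lambda F)$ agree. I expect the only delicate point to be the domain-independence step, where one must track the Radon–Nikodym derivative $\de\nu^u/\de\nu^v$ against $\lvert p_u(v,\omega)^{\frac12+it}\rvert^2=p_u(v,\omega)$ and verify that the two weights cancel exactly.
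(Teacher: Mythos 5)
Your proof is correct and follows essentially the same route as the paper's: both reduce the identity $\Lambda_u F=\Lambda_v F$ to $\Phi_v(\Lambda_u F)=\Phi_v(\Lambda_v F)$ via the injectivity of $\Phi_v$, and then conclude from the intertwining relation \eqref{eq:relationthetas} together with the observation that the multiplier $\sqrt{c_q}/\lvert\mathbf{c}(\tfrac12+it)\rvert$ depends only on $t$, so the factor $p_v(u,\omega)^{\frac12+it}$ passes through it. Your additional verification that the domain $\mathcal{E}$ is itself independent of $v$ (via the cancellation $p_u(v,\omega)\,p_v(u,\omega)=1$ against the Radon--Nikodym derivative $\de\nu^u/\de\nu^v$) is correct and addresses a point the paper's proof leaves implicit.
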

\begin{proof}
	Take $u\in X$ and put
	\[
	\tilde{\Lambda}F={\Psi_u^*}^{-1}\mathcal{J}_u\Psi_u^*F.
	\]
	We verify  that $\Lambda=\tilde{\Lambda}$. By Proposition~\ref{prop:fundamentaloperator}, it is sufficient to prove that
	\begin{align*}
		\Phi_v(\tilde{\Lambda}F)=\Phi_v(\Lambda F)
	\end{align*}
	for every $F\in L^2(\Xi)$. For almost every $(\omega,t)\in\Omega\times\mathbb{T}$ \eqref{eq:relationthetas} yields 
	\begin{align*}
		\Phi_v(\tilde{\Lambda}F)(\omega,t)&=p_v(u,\omega)^{\frac{1}{2}+it}\Phi_u(\tilde{\Lambda}F)(\omega,t)\\
		&=p_v(u,\omega)^{\frac{1}{2}+it}(I\otimes\mathcal{F})(\mathcal{J}_u\Psi_u^*F)(\omega,t)\\
		&=p_v(u,\omega)^{\frac{1}{2}+it}\frac{\sqrt{c_q}}{\left|\textbf{c}(\frac{1}{2}+it)\right|}(I\otimes\mathcal{F})\Psi_u^*F(\omega,t)\\
		&=\frac{\sqrt{c_q}}{\left|\textbf{c}(\frac{1}{2}+it)\right|}(I\otimes\mathcal{F})\Psi_v^*F(\omega,t)\\
		&=(I\otimes\mathcal{F})(\mathcal{J}_v\Psi_v^*F)(\omega,t)=\Phi_v(\Lambda F)(\omega,t)
	\end{align*}
	and we can conclude that $\Lambda=\tilde{\Lambda}$.
\end{proof}

As a direct consequence of Lemma~\ref{invlam}, for every $v\in X$ we have that for every $F\in\mathcal{E}$ and for almost every $(\omega,t)\in \Omega\times\mathbb{T}$
\begin{align}\label{eq:philambda}
\nonumber\Phi_v(\Lambda F)(\omega,t)&=(I\otimes\mathcal{F})(\mathcal{J}_v\Psi_{v}^*F)(\omega,t)\\
\nonumber&=\frac{\sqrt{c_q}}{\left|\textbf{c}(\frac{1}{2}+it)\right|}(I\otimes\mathcal{F})(\Psi_{v}^*F)(\omega,t)\\
&=\frac{\sqrt{c_q}}{\left|\textbf{c}(\frac{1}{2}+it)\right|}\Phi_vF(\omega,t).
\end{align}
\par
The operator $\Lambda$ intertwines the regular representation $\hat{\pi}$ as shown by the next proposition.

\begin{prop}\label{interhat}
	The subspace $\mathcal{E}$ is $\hat{\pi}$-invariant and for all $F\in\mathcal{E}$ and $g\in G$ 
	\begin{equation}\label{eq:intertwininglambda}
	\hat{\pi} (g)\Lambda F=\Lambda\hat{\pi}(g)F.
	\end{equation}
\end{prop}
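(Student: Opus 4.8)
The plan is to reduce both assertions to a single covariance identity for the isometry $\Phi_v$ and then to use that $\Phi_v$ is injective. The decisive computation is to track how $\hat{\pi}(g)$ passes through $\Psi_v^*$ and $\Phi_v$. First I would unwind $\Psi_v^*(\hat{\pi}(g)F)$ directly from the definitions: since $\hat{\pi}(g)F(\xi)=F(g^{-1}.\xi)$ and, by the action formula~\eqref{eq:actionhorocycles}, $g^{-1}.h_{\omega,n}^v=h_{g^{-1}\cdot\omega,n}^{g^{-1}[v]}=\Psi_{g^{-1}[v]}(g^{-1}\cdot\omega,n)$, one gets
\[
\Psi_v^*(\hat{\pi}(g)F)(\omega,n)=q^{\frac n2}F(g^{-1}.h_{\omega,n}^v)=q^{\frac n2}(F\circ\Psi_{g^{-1}[v]})(g^{-1}\cdot\omega,n)=\Psi_{g^{-1}[v]}^*F(g^{-1}\cdot\omega,n).
\]
Because $I\otimes\mathcal{F}$ acts only in the index variable $n$, which is left fixed both by $\hat{\pi}(g)$ and by the substitution $\omega\mapsto g^{-1}\cdot\omega$, applying it yields the covariance identity
\[
\Phi_v(\hat{\pi}(g)F)(\omega,t)=\Phi_{g^{-1}[v]}F(g^{-1}\cdot\omega,t),\qquad(\star)
\]
for a.e. $(\omega,t)\in\Omega\times\mathbb{T}$ and every $F\in L^2(\Xi)$. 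This identity is the heart of the argument.

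For the $\hat{\pi}$-invariance of $\mathcal{E}$, I would insert $(\star)$ into the $L^2_{v,\mathbf{c}}$-norm and change variables on the boundary. Doing the $\omega$-integral first (the weight $c_q|\mathbf{c}(\tfrac12+it)|^{-2}$ depends only on $t$) and using the quasi-invariance~\eqref{eq:gvarinceomega} together with $p_v(g^{-1},\omega)\,\de\nu^v(\omega)=\de\nu^{g^{-1}[v]}(\omega)$, which follows from~\eqref{verinv} and $p_v(g^{-1},\omega)=q^{\kappa_\omega(v,g^{-1}[v])}$, gives
\[
\|\Phi_v(\hat{\pi}(g)F)\|^2_{L_{v,\mathbf{c}}^2}=\int_{\Omega\times\mathbb{T}}|\Phi_{g^{-1}[v]}F(\omega,t)|^2\,c_q|\mathbf{c}(\tfrac12+it)|^{-2}\,\de\nu^{g^{-1}[v]}(\omega)\,\de t=\|\Phi_{g^{-1}[v]}F\|^2_{L_{g^{-1}[v],\mathbf{c}}^2}.
\]
Since membership in $\mathcal{E}$ does not depend on the reference vertex (by~\eqref{eq:relationthetas} and~\eqref{verinv} one checks $|\Phi_uF|^2\de\nu^u=|\Phi_vF|^2\de\nu^v$), the right-hand side is finite precisely when $F\in\mathcal{E}$; hence $\hat{\pi}(g)F\in\mathcal{E}$ and $\mathcal{E}$ is $\hat{\pi}$-invariant.

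Finally, for the intertwining~\eqref{eq:intertwininglambda} I would fix $v$ and use that $\Phi_v$ is injective (Proposition~\ref{prop:fundamentaloperator}), so that, since both $\hat{\pi}(g)\Lambda F$ and $\Lambda\hat{\pi}(g)F$ lie in $L^2(\Xi)$, it suffices to show $\Phi_v(\hat{\pi}(g)\Lambda F)=\Phi_v(\Lambda\hat{\pi}(g)F)$. For the left-hand side I apply $(\star)$ to $\Lambda F$ and then~\eqref{eq:philambda} at the point $(g^{-1}\cdot\omega,t)$, obtaining $\tfrac{\sqrt{c_q}}{|\mathbf{c}(\frac12+it)|}\Phi_{g^{-1}[v]}F(g^{-1}\cdot\omega,t)$; for the right-hand side I apply~\eqref{eq:philambda} first (legitimate because $\hat{\pi}(g)F\in\mathcal{E}$ by the previous paragraph) and then $(\star)$, obtaining the same expression, since the multiplier $\tfrac{\sqrt{c_q}}{|\mathbf{c}(\frac12+it)|}$ depends only on $t$ and is therefore unaffected by the substitution $\omega\mapsto g^{-1}\cdot\omega$. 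The two sides agree, which proves~\eqref{eq:intertwininglambda}. The main obstacle is the bookkeeping behind $(\star)$: one must keep the three simultaneous effects of $\hat{\pi}(g)$ — the shift of reference vertex $v\mapsto g^{-1}[v]$, the boundary rotation $\omega\mapsto g^{-1}\cdot\omega$, and the invariance of the index $n$ — correctly aligned, so that the $t$-only Fourier multiplier commutes through cleanly.
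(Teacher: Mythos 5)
Your proposal is correct and follows essentially the same route as the paper's proof: the same covariance identity $\Phi_v(\hat{\pi}(g)F)(\omega,t)=\Phi_{g^{-1}[v]}F(g^{-1}\cdot\omega,t)$ (the paper's \eqref{eq:expressionpihat}), the same change of variables via \eqref{eq:gvarinceomega} and \eqref{verinv} for the $\hat\pi$-invariance of $\mathcal{E}$, and the same reduction of the intertwining relation through the injectivity of $\Phi_v$ combined with \eqref{eq:philambda}. Your explicit check that membership in $\mathcal{E}$ is vertex-independent is a small clarification the paper leaves implicit, but the argument is otherwise identical.
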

\begin{proof}
We consider $F\in\mathcal{E}$, $g\in G$ and we prove that $\hat{\pi}(g)F\in\mathcal{E}$. 
We observe that
\[
\hat{\pi} (g)F\circ\Psi_v(\omega,n)=F\circ\Psi_{g^{-1}[v]}(g^{-1}\cdot\omega,n)
\]
for almost every $(\omega,n)\in\Omega\times\mathbb{Z}$. Therefore, we have 
\[
\Psi_{v}^*(\hat\pi(g)F)(\omega, n)=\Psi_{g^{-1}[v]}^*F(g^{-1}\cdot\omega,n)
\]
and consequently 
\begin{equation}\label{eq:expressionpihat}
\Phi_v(\hat\pi(g)F)(\omega, t)=\Phi_{g^{-1}[v]}F(g^{-1}\cdot\omega,t)
\end{equation}
for almost every $(\omega,t)\in\Omega\times\mathbb{T}$.
By  equations~\eqref{eq:gvarinceomega}, \eqref{verinv} and \eqref{eq:expressionpihat}
\begin{align*}
&\int_{\Omega\times\mathbb{T}}|\Phi_v(\hat{\pi}(g)F)(\omega,t)|^2\frac{c_q\de\nu^{v}(\omega)\de t}{|\textbf{c}(\frac{1}{2}+it)|^2}\\
&=\int_{\mathbb{T}}\int_{\Omega}|\Phi_{g^{-1}[v]}F(g^{-1}\cdot\omega,t)|^2\frac{c_q\de\nu^{v}(\omega)\de t}{|\textbf{c}(\frac{1}{2}+it)|^2}\\
&=\int_{\mathbb{T}}\int_{\Omega}|\Phi_{g^{-1}[v]}F(\omega,t)|^2p_v(g^{-1}[v],\omega)\frac{c_q\de\nu^{v}(\omega)\de t}{|\textbf{c}(\frac{1}{2}+it)|^2}\\
&=\int_{\Omega\times\mathbb{T}}|\Phi_{g^{-1}[v]}F(\omega,t)|^2\frac{c_q\de\nu^{g^{-1}[v]}(\omega)\de t}{|\textbf{c}(\frac{1}{2}+it)|^2}<+\infty
\end{align*}
and we conclude that $\hat{\pi}(g)F\in\mathcal{E}$. We next prove the intertwining property~\eqref{eq:intertwininglambda}. We have already observed that, by Proposition~\ref{prop:fundamentaloperator}, it is enough to prove that 
\begin{align*}
\Phi_v(\hat{\pi} (g)\Lambda F)=\Phi_v(\Lambda\hat{\pi}(g)F)
\end{align*}
for every $g\in G$ and $F\in\mathcal{E}$. By  equations~\eqref{eq:philambda} and \eqref{eq:expressionpihat},
for almost every $(\omega,t)\in\Omega\times\mathbb{T}$, we have the chain of equalities 
\begin{align*}
\Phi_v(\hat{\pi} (g)\Lambda F)(\omega,t)&=\Phi_{g^{-1}[v]}(\Lambda F)(g^{-1}\cdot\omega,t)\\
&=\frac{\sqrt{c_q}}{\left|\textbf{c}(\frac{1}{2}+it)\right|}\Phi_{g^{-1}[v]}F(g^{-1}\cdot\omega,t)\\
&=\frac{\sqrt{c_q}}{\left|\textbf{c}(\frac{1}{2}+it)\right|}\Phi_{v}(\hat\pi(g)F)(\omega,t)
=\Phi_v(\Lambda\hat{\pi}(g)F)(\omega,t),
\end{align*}
which proves the intertwining relation. 
\end{proof}
The next result follows directly by Proposition~\ref{prop:fundamentaloperator} and equation~\eqref{eq:philambda}.
\begin{cor}\label{cor:lambda}
For every $F\in \mathcal{E}$, $\Lambda F\in L^2_{\flat}(\Xi)$ if and only if $F\in L^2_{\flat}(\Xi)$.
\end{cor}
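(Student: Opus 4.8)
The plan is to reduce the statement to the characterization of $L^2_{\flat}(\Xi)$ furnished by the last assertion of Proposition~\ref{prop:fundamentaloperator}: for a fixed $v\in X$, a function $G\in L^2(\Xi)$ lies in $L^2_{\flat}(\Xi)$ if and only if $\Phi_v G$ satisfies Property $\sharp$. Applying this criterion once to $F$ and once to $\Lambda F$, the corollary is equivalent to the assertion that $\Phi_v(\Lambda F)$ satisfies Property $\sharp$ precisely when $\Phi_v F$ does. The link between the two is equation~\eqref{eq:philambda}, which expresses $\Phi_v(\Lambda F)$ as the pointwise product
\[
\Phi_v(\Lambda F)(\omega,t)=m(t)\,\Phi_v F(\omega,t),\qquad m(t)=\frac{\sqrt{c_q}}{\left|\mathbf{c}(\tfrac{1}{2}+it)\right|}.
\]

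The key feature I would isolate is that the multiplier $m$ depends on the frequency variable $t$ alone and not on $\omega$, and that it is even in $t$. The evenness is the only genuine point: it follows from the symmetry $\mathbf{c}(\tfrac{1}{2}-it)=\overline{\mathbf{c}(\tfrac{1}{2}+it)}$, which is immediate from the explicit formula for the $\mathbf{c}$-function, since replacing $t$ by $-t$ conjugates both numerator and denominator; hence $\left|\mathbf{c}(\tfrac{1}{2}-it)\right|=\left|\mathbf{c}(\tfrac{1}{2}+it)\right|$ and $m(-t)=m(t)$. Moreover $m(t)\neq 0$ for almost every $t\in\mathbb{T}$.

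With these properties recorded, I would substitute $\Phi_v(\Lambda F)(\omega,t)=m(t)\Phi_v F(\omega,t)$ into the symmetry condition~\eqref{invhf} written for $\Phi_v(\Lambda F)$, obtaining
\[
m(t)\!\int_{\Omega}p_v(x,\omega)^{\frac{1}{2}-it}\Phi_v F(\omega,t)\,\de\nu^v(\omega)=m(-t)\!\int_{\Omega}p_v(x,\omega)^{\frac{1}{2}+it}\Phi_v F(\omega,-t)\,\de\nu^v(\omega),
\]
where the scalars $m(t)$ and $m(-t)$ have been pulled out of the integrals because they are constant in $\omega$. Using $m(-t)=m(t)$ and dividing by $m(t)$, which is legitimate for almost every $t$, collapses this to Property $\sharp$ for $\Phi_v F$; reading the chain of equalities backwards gives the converse. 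Thus $\Phi_v(\Lambda F)$ satisfies Property $\sharp$ if and only if $\Phi_v F$ does, and the corollary follows. The argument is short, and essentially the whole of it reduces to the even, $\omega$-independent nature of the Fourier multiplier $m$, so I expect the verification of the $\mathbf{c}$-function symmetry to be the only step requiring any attention.
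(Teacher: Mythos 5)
Your proposal is correct and follows essentially the same route as the paper: both reduce the statement to the characterization of $L^2_\flat(\Xi)$ in Proposition~\ref{prop:fundamentaloperator} and then observe, via~\eqref{eq:philambda}, that multiplying $\Phi_vF$ by the $\omega$-independent, even, a.e.\ nonvanishing multiplier $\sqrt{c_q}/\left|\mathbf{c}(\tfrac{1}{2}+it)\right|$ preserves Property $\sharp$ in both directions. Your explicit verification of the evenness from the formula for the $\mathbf{c}$-function is a detail the paper leaves implicit, but the argument is the same.
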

\begin{proof}
By Proposition~\ref{prop:fundamentaloperator}, $\Lambda F\in L^2_{\flat}(\Xi)$ if and only if $\Phi_v(\Lambda F)$ satisfies~\eqref{invhf}. 
By \eqref{eq:philambda} and since $t\mapsto \left|\textbf{c}(1/2+it)\right|$ is even, $\Phi_v(\Lambda F)$ satisfies~\eqref{invhf} if and only if $\Phi_v(F)$ satisfies~\eqref{invhf}, which is equivalent 
to $F\in L^2_\flat(\Xi)$. This concludes the proof. 
\end{proof}

\noindent
We are now in a position to prove our main result.
\begin{thm}\label{thm:unitarizationtheorem}
	The composite operator $\Lambda\mathcal{R}$ extends to a unitary operator $$\mathcal{Q}\colon L^2(X)\longrightarrow  L_\flat^2(\Xi)$$ which intertwines the representations $\pi$ and $\hat{\pi}$, i.e.
	\begin{equation}\label{intertw}
	\hat{\pi}(g)\mathcal{Q}=\mathcal{Q}\pi(g),\hspace{8mm}g\in G.
	\end{equation}
\end{thm}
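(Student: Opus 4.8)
The plan is to reduce the entire statement to the single identity
\[
\Phi_v(\Lambda\mathcal{R}f)(\omega,t)=m(t)\,\mathcal{H}_v f(\omega,t),\qquad m(t):=\frac{\sqrt{c_q}}{\left|\mathbf{c}(\tfrac12+it)\right|},
\]
valid first for $f\in C_c(X)$, and then to read off unitarity from the unitarity of the Helgason--Fourier transform. First I would check that $\Lambda\mathcal{R}f$ is even defined: by Corollary~\ref{cor:radonbemolle}, $\Phi_v(\mathcal{R}f)=\mathcal{H}_v f\in L_{v,\mathbf{c}}^2(\Omega\times\mathbb{T})$, so $\mathcal{R}f\in\mathcal{E}$, and the displayed identity follows by combining \eqref{eq:philambda} with Corollary~\ref{cor:radonbemolle}. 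Since $\Phi_v$ is isometric (Proposition~\ref{prop:fundamentaloperator}) and multiplication by $m$ carries the $L_{v,\mathbf{c}}^2(\Omega\times\mathbb{T})$-norm exactly to the $L_v^2(\Omega\times\mathbb{T})$-norm, Theorem~\ref{extH} gives $\|\Lambda\mathcal{R}f\|_{L^2(\Xi)}=\|\mathcal{H}_v f\|_{L_{v,\mathbf{c}}^2}=\|f\|_{L^2(X)}$. Because $\mathcal{R}f\in L^2_\flat(\Xi)$ (Corollary~\ref{cor:radonbemolle}) and $\mathcal{R}f\in\mathcal{E}$, Corollary~\ref{cor:lambda} yields $\Lambda\mathcal{R}f\in L^2_\flat(\Xi)$. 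Thus $\Lambda\mathcal{R}$ is an isometry of $C_c(X)$ into the closed, hence complete, subspace $L^2_\flat(\Xi)$, and I would extend it uniquely by density to an isometry $\mathcal{Q}\colon L^2(X)\to L^2_\flat(\Xi)$.

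Next I would upgrade the identity to all of $L^2(X)$. Approximating $f$ by $f_n\in C_c(X)$ and using continuity of $\Phi_v$ together with the continuity of multiplication by $m$ as a map $L_{v,\mathbf{c}}^2(\Omega\times\mathbb{T})\to L_v^2(\Omega\times\mathbb{T})$ and the convergence $\mathcal{H}_v f_n\to\mathscr{H}_v f$, I obtain $\Phi_v(\mathcal{Q}f)=m\cdot\mathscr{H}_v f$ for every $f\in L^2(X)$. Surjectivity onto $L^2_\flat(\Xi)$ is then the heart of the matter. Given $G\in L^2_\flat(\Xi)$, Proposition~\ref{prop:fundamentaloperator} tells me $\psi:=\Phi_v G$ satisfies Property~$\sharp$. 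Here I would use that $t\mapsto\left|\mathbf{c}(\tfrac12+it)\right|$ is even and a.e.\ finite and nonvanishing: dividing by $m$ preserves the symmetry \eqref{invhf} (both sides pick up the same even factor) and sends $L_v^2(\Omega\times\mathbb{T})^\sharp$ onto $L_{v,\mathbf{c}}^2(\Omega\times\mathbb{T})^\sharp$, so $\psi/m\in L_{v,\mathbf{c}}^2(\Omega\times\mathbb{T})^\sharp$. By the surjectivity part of Theorem~\ref{extH} there is $f\in L^2(X)$ with $\mathscr{H}_v f=\psi/m$, whence $\Phi_v(\mathcal{Q}f)=m\,\mathscr{H}_v f=\psi=\Phi_v G$; injectivity of $\Phi_v$ gives $\mathcal{Q}f=G$. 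Hence $\mathcal{Q}$ is a surjective isometry, i.e.\ unitary.

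Finally, for the intertwining relation \eqref{intertw} I would argue on the dense subspace $C_c(X)$ and extend by continuity. For $g\in G$ and $f\in C_c(X)$, Proposition~\ref{interradon} gives $\mathcal{R}(\pi(g)f)=\hat{\pi}(g)\mathcal{R}f$, and since $\mathcal{R}f\in\mathcal{E}$ and $\mathcal{E}$ is $\hat{\pi}$-invariant, Proposition~\ref{interhat} yields
\[
\mathcal{Q}\pi(g)f=\Lambda\hat{\pi}(g)\mathcal{R}f=\hat{\pi}(g)\Lambda\mathcal{R}f=\hat{\pi}(g)\mathcal{Q}f.
\]
As $f\mapsto\mathcal{Q}\pi(g)f$ and $f\mapsto\hat{\pi}(g)\mathcal{Q}f$ are bounded and agree on $C_c(X)$, they coincide on $L^2(X)$.

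I expect the main obstacle to be the surjectivity step, and specifically the verification that the even Fourier multiplier $m$ intertwines the two symmetry constraints --- Property~$\sharp$ on the Helgason--Fourier side and Property~$\flat$ on $L^2(\Xi)$, identified through $\Phi_v$ by Proposition~\ref{prop:fundamentaloperator} --- so that the range of $\mathcal{Q}$ is \emph{exactly} $L^2_\flat(\Xi)$ rather than a proper subspace; once the identity $\Phi_v\mathcal{Q}=(m\,\cdot)\circ\mathscr{H}_v$ is in place, isometry, range, and surjectivity all follow from the unitarity of $\mathscr{H}_v$ and of $\Phi_v$.
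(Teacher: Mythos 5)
Your proof is correct, and the isometry and intertwining steps coincide with the paper's own argument. The one place where you genuinely diverge is the surjectivity of $\mathcal{Q}$ onto $L_\flat^2(\Xi)$. The paper argues by density of the range: it takes $F\in L_\flat^2(\Xi)$ orthogonal to $\Lambda\mathcal{R}(C_c(X))$, rewrites the inner product via Parseval and the Fourier Slice Theorem as $\langle\Theta_v F,\mathcal{H}_v f\rangle=0$ with $\Theta_vF=\bigl(|\mathbf{c}(\tfrac12+it)|/\sqrt{c_q}\bigr)\Phi_vF$, checks that $\Theta_vF\in L_{v,\mathbf{c}}^2(\Omega\times\mathbb{T})^\sharp$, and invokes the density of $\mathcal{H}_v(C_c(X))$ in that space (Theorem~\ref{extH}) to conclude $F=0$. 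You instead first extend the slice identity to $\Phi_v(\mathcal{Q}f)=m\,\mathscr{H}_vf$ on all of $L^2(X)$ and then produce, for each $G\in L_\flat^2(\Xi)$, an explicit preimage $f=\mathscr{H}_v^{-1}(\Phi_vG/m)$, using the surjectivity half of Theorem~\ref{extH} together with the observation that division by the even, a.e.\ nonvanishing multiplier $m$ carries $L_v^2(\Omega\times\mathbb{T})$ isometrically onto $L_{v,\mathbf{c}}^2(\Omega\times\mathbb{T})$ and preserves the symmetry~\eqref{invhf}. The two arguments are dual to one another and both rest entirely on Theorem~\ref{extH}: yours has the small advantage of delivering the extended Fourier Slice Theorem (which the paper proves separately afterwards as ``version II'') inside the main proof and of exhibiting $\mathcal{Q}^{-1}$ explicitly, while the paper's orthogonality argument avoids the limiting step needed to push the slice identity beyond $C_c(X)$. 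The point you flagged as the main obstacle --- that the even multiplier intertwines Property~$\sharp$ with Property~$\flat$ through $\Phi_v$ --- is exactly where the content lies, and you handle it correctly via Proposition~\ref{prop:fundamentaloperator}.
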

Theorem~\ref{thm:unitarizationtheorem} implies that $\hat{\pi}$ is not irreducible, too. 
\begin{proof}
	We first show that $\Lambda\mathcal{R}$ extends to a unitary operator $\mathcal{Q}$ from $L^2(X)$ onto $L_\flat^2(\Xi)$. Let $f\in C_c(X)$ and $v\in X$. By the Fourier Slice Theorem \eqref{eq:fst}, the Parseval identity and the definition of $\Lambda$, we have that
	\begin{align*}
	\|f\|_{ L^2(X)}^2&=\|\mathcal{H}_{v} f\|_{L_{v,\emph{\textbf{c}}}^2(\Omega\times\mathbb{T})^\sharp}^2\\
	&=\|(I\otimes\mathcal{F})(\Psi_{v}^*(\mathcal{R}f))\|_{L_{v,\emph{\textbf{c}}}^2(\Omega\times\mathbb{T})^\sharp}^2\\
	&=\int_{\Omega\times\mathbb{T}}|(I\otimes\mathcal{F})(\mathcal{J}_v\Psi_{v}^*(\mathcal{R}f))(\omega,t)|^2\de\nu^{v}(\omega)\de t\\
	&=\int_{\Omega\times\mathbb{T}}|(I\otimes\mathcal{F})(\Psi_{v}^*(\Lambda\mathcal{R}f))(\omega,t)|^2\de\nu^{v}(\omega)\de t\\
	&=\int_{\Omega\times\mathbb{Z}}|\Psi_{v}^*(\Lambda\mathcal{R}f)(\omega,n)|^2\de\nu^{v}(\omega)\de n\\
	&=\|\Lambda\mathcal{R}f\|_{L^2(\Xi)}^2.
	\end{align*}
	Hence, $\Lambda\mathcal{R}$ is an isometric operator from $C_c(X)$ into $L^2(\Xi)$. Since $C_c(X)$ is dense in $\Le^2(X)$, $\Lambda\mathcal{R}$ extends to a unique isometry from $\Le^2(X)$ onto the closure of $\mathrm{Ran}(\Lambda\mathcal{R})$ in $L^2(\Xi)$. We must show that $\Lambda\mathcal{R}$ has dense image in $L_\flat^2(\Xi)$. The inclusion $\textrm{Ran}(\Lambda\mathcal{R})\subseteq L_\flat^2(\Xi)$ follows immediately from Corollary~\ref{cor:radonbemolle} and Corollary~\ref{cor:lambda}.
	Let $F\in L_\flat^2(\Xi)$ be such that $\langle F,\Lambda\mathcal{R}f\rangle_{L^2(\Xi)}=0$ for every $f\in C_c(X)$. By the Parseval identity and the Fourier Slice Theorem \eqref{eq:fst} we have that
	\begin{align*}
	0&=\langle F,\Lambda\mathcal{R}f\rangle_{L^2(\Xi)}\\
	&=\int_{\Omega\times\mathbb{Z}}(F\circ\Psi_{v})(\omega, n)\overline{(\Lambda\mathcal{R}f\circ\Psi_{v})(\omega, n)}q^n\de\nu^{v}(\omega)\de n\\
	&=\int_{\Omega\times\mathbb{Z}}(\Psi_{v}^*F)(\omega, n)\overline{(\mathcal{J}_v\Psi_{v}^*(\mathcal{R}f))(\omega, n)}\de\nu^{v}(\omega)\de n\\
	&=\int_{\Omega\times\mathbb{T}}\Phi_v(F)(\omega, t)\overline{(I\otimes\mathcal{F})(\mathcal{J}_v\Psi_{v}^*(\mathcal{R}f))(\omega, t)}\de\nu^{v}(\omega)\de t\\
	&=\int_{\Omega\times\mathbb{T}}\Phi_v(F)(\omega, t)\overline{(I\otimes\mathcal{F})(\Psi_{v}^*(\mathcal{R}f))(\omega, t)}\frac{\sqrt{c_q}\de\nu^{v}(\omega)\de t}{|\textbf{c}(\frac{1}{2}+it)|}\\
	&=\int_{\Omega\times\mathbb{T}}\frac{|\textbf{c}(\frac{1}{2}+it)|}{\sqrt{c_q}}\Phi_v(F)(\omega,t)\overline{\mathcal{H}_{v}f(\omega, t)}\frac{c_q\de\nu^{v}(\omega)\de t}{|\textbf{c}(\frac{1}{2}+it)|^2}.
	\end{align*}
	For simplicity of notation, we denote by $\Theta_vF$ the function on $\Omega\times\mathbb{T}$ defined as 
	\[
	\Theta_vF(\omega,t)=\frac{|\textbf{c}(\frac{1}{2}+it)|}{\sqrt{c_q}}\Phi_v(F)(\omega,t),\qquad \text{a.e.}\, (\omega,t)\in\Omega\times\mathbb{T}.
	\]
	Hence we have proved that $\langle\Theta_v F, \mathcal{H}_v f\rangle=0$
 for every $f\in C_c(X)$. The following two facts follow immediately from Proposition~\ref{prop:fundamentaloperator}. Since $\Phi_v$ is an isometry from $L^2(\Xi)$ into $L_v^2\left(\Omega\times\mathbb{T}\right)$, then $\Theta_vF$ belongs to $L_{v,\emph{\textbf{c}}}^2(\Omega\times\mathbb{T})$. Furthermore, since $F\in L_\flat^2(\Xi)$ and since $t\mapsto \left|\textbf{c}(1/2+it)\right|$ is even, then $\Theta_vF	\in \Le_{v,\emph{\textbf{c}}}^2(\Omega\times\mathbb{T})^\sharp$.
	By Theorem \ref{extH}, $\mathcal{H}_{v}(C_c(X))$ is dense in $\Le_{v,\emph{\textbf{c}}}^2(\Omega\times\mathbb{T})^\sharp$. Thus, $\Theta_vF=0$ in $\Le_{v,\emph{\textbf{c}}}^2(\Omega\times\mathbb{T})^\sharp$ 
	and then $\Phi_v(F)=0$ in $L_v^2\left(\Omega\times\mathbb{T}\right)$. Since $\Phi_v$ is an isometry from $L^2(\Xi)$ into $L_v^2\left(\Omega\times\mathbb{T}\right)$, then $F=0$ in $L^2(\Xi)$. 
	Therefore, $\overline{\mathrm{Ran}(\Lambda\mathcal{R})}=L_\flat^2(\Xi)$ and $\Lambda\mathcal{R}$ extends uniquely to a surjective isometry
	$$\mathcal{Q}\colon \Le^2(X)\longrightarrow L_\flat^2(\Xi).$$
	Observe that $\mathcal{Q}f=\Lambda\mathcal{R}f$ for every $f\in C_c(X)$.
	Then, the intertwining property \eqref{intertw} follows immediately from Proposition~\ref{interradon} and Proposition~\ref{interhat}.
	\end{proof}

As a byproduct, one obtains an extended Fourier Slice Theorem.
	\begin{prop}[Fourier Slice Theorem, version II]
		Let $v\in X$. For every $f\in L^2(X)$
		$$(I\otimes\mathcal{F})(\Psi_{v}^*(\mathcal{Q}f))(\omega,t)=\frac{\sqrt{c_q}}{|\mathbf{c}(\frac{1}{2}+it)|}\mathscr{H}_vf(\omega,t)$$
		for almost every $(\omega,t)\in\Omega\times\mathbb{T}$.
	\end{prop}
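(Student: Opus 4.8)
The plan is to prove the extended Fourier Slice Theorem by combining the unitarity of the operator $\mathcal{Q}$ established in Theorem~\ref{thm:unitarizationtheorem} with a density argument, starting from the case $f\in C_c(X)$, where everything is known explicitly. First I would observe that for $f\in C_c(X)$ the identity $\mathcal{Q}f=\Lambda\mathcal{R}f$ holds, and hence by equation~\eqref{eq:philambda} applied to $F=\mathcal{R}f\in\mathcal{E}$ (note that $\mathcal{R}f\in\mathcal{E}$ since $\Phi_v(\mathcal{R}f)=\mathcal{H}_vf$ is compactly supported in frequency and thus lies in $L_{v,\mathbf{c}}^2(\Omega\times\mathbb{T})$) together with Corollary~\ref{cor:radonbemolle}, one gets
\[
(I\otimes\mathcal{F})(\Psi_v^*(\mathcal{Q}f))(\omega,t)=\Phi_v(\Lambda\mathcal{R}f)(\omega,t)=\frac{\sqrt{c_q}}{|\mathbf{c}(\tfrac12+it)|}\Phi_v(\mathcal{R}f)(\omega,t)=\frac{\sqrt{c_q}}{|\mathbf{c}(\tfrac12+it)|}\mathcal{H}_vf(\omega,t),
\]
which is precisely the claimed formula on the dense subspace $C_c(X)$, with $\mathscr{H}_vf=\mathcal{H}_vf$ there.

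The second step is to extend this identity from $C_c(X)$ to all of $L^2(X)$ by continuity. For this I would fix $v\in X$ and interpret both sides of the asserted equation as bounded operators from $L^2(X)$ into $L_v^2(\Omega\times\mathbb{T})$, then check they agree on the dense subspace $C_c(X)$. The left-hand side is the composition of $\mathcal{Q}$ (a surjective isometry onto $L_\flat^2(\Xi)$ by Theorem~\ref{thm:unitarizationtheorem}), followed by $\Phi_v=(I\otimes\mathcal{F})\Psi_v^*$, which by Proposition~\ref{prop:fundamentaloperator} is an isometry of $L^2(\Xi)$ into $L_v^2(\Omega\times\mathbb{T})$; hence $f\mapsto(I\otimes\mathcal{F})\Psi_v^*(\mathcal{Q}f)$ is bounded from $L^2(X)$ into $L_v^2(\Omega\times\mathbb{T})$. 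For the right-hand side, $\mathscr{H}_v$ is unitary from $L^2(X)$ onto $L_{v,\mathbf{c}}^2(\Omega\times\mathbb{T})^\sharp$ by Theorem~\ref{extH}, and multiplication by $\sqrt{c_q}/|\mathbf{c}(\tfrac12+it)|$ is exactly the factor that converts the $L_{v,\mathbf{c}}^2$ norm into the $L_v^2$ norm, so $f\mapsto \sqrt{c_q}\,|\mathbf{c}(\tfrac12+it)|^{-1}\mathscr{H}_vf$ is likewise a bounded (indeed isometric) map into $L_v^2(\Omega\times\mathbb{T})$.

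With both maps bounded and agreeing on the dense set $C_c(X)$, they coincide on all of $L^2(X)$, giving the result for almost every $(\omega,t)$. The only genuine care needed is the verification that the two sides are bounded into the \emph{same} target space $L_v^2(\Omega\times\mathbb{T})$ with the measure $\nu^v\otimes\de t$: the point is that the weight $\sqrt{c_q}/|\mathbf{c}(\tfrac12+it)|$ is precisely chosen so that the codomains match, which is exactly the reason the operator $\mathcal{J}_v$ was built as that Fourier multiplier in the first place. I expect the main (though modest) obstacle to be bookkeeping the identification $\mathscr{H}_vf=\mathcal{H}_vf$ on $C_c(X)$ versus the extended $\mathscr{H}_v$ on $L^2(X)$, and confirming that the density/continuity extension commutes with passing to the frequency side via $I\otimes\mathcal{F}$; both are routine once the boundedness of each side into $L_v^2(\Omega\times\mathbb{T})$ is recorded.
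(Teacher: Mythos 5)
Your argument is correct and follows essentially the same route as the paper: both establish the identity on $C_c(X)$ via $\mathcal{Q}f=\Lambda\mathcal{R}f$, equation~\eqref{eq:philambda} and Corollary~\ref{cor:radonbemolle}, and then extend by density --- the paper through an a.e.\ convergent subsequence, you through the equality of two bounded operators on a dense subspace, which is an equivalent (and slightly cleaner) packaging. The only slip is your parenthetical reason for $\mathcal{R}f\in\mathcal{E}$: ``compactly supported in frequency'' is not the point ($\mathbb{T}$ is already compact); the correct justification is that $\Phi_v(\mathcal{R}f)=\mathcal{H}_vf$ lies in $L_{v,\mathbf{c}}^2(\Omega\times\mathbb{T})$ by the Plancherel theorem (Theorem~\ref{extH}).
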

	\begin{proof}
		Let $v\in X$. For every $f\in C_c(X)$, by \eqref{eq:phivradonhilbert} and \eqref{eq:philambda} we have that 
	\begin{align*}
		(I\otimes\mathcal{F})(\Psi_{v}^*(\mathcal{Q}f))(\omega,t)&=\Phi_v(\mathcal{Q}f)(\omega,t)\\
		&=\Phi_v(\Lambda\mathcal{R}f)(\omega,t)\\
		&=\frac{\sqrt{c_q}}{|\mathbf{c}(\frac{1}{2}+it)|}\Phi_v(\mathcal{R}f)(\omega,t)\\
		&=\frac{\sqrt{c_q}}{|\mathbf{c}(\frac{1}{2}+it)|}\mathcal{H}_vf(\omega,t),
			 \end{align*}
			 for almost every $(\omega,t)\in\Omega\times\mathbb{T}$.
			Let $f\in L^2(X)$, since $C_c(X)$ is dense in $L^2(X)$, then there exists a sequence $(f_m)_m\subseteq C_c(X)$ such that $f_m\rightarrow f$ in $L^2(X)$. Then, since $\mathcal{Q}$ is a unitary operator from $L^2(X)$ onto $L_\flat^2(\Xi)$ and $\Phi_v$ is an isometry from $L^2(\Xi)$ into $L_v^2\left(\Omega\times\mathbb{T}\right)$, then $\Phi_v(\mathcal{Q}f_m)\to\Phi_v(\mathcal{Q}f)$ in $L_{v}^2\left(\Omega\times\mathbb{T}\right)$.
			Since $f_m\in C_c(X)$ for every $m\in\mathbb{N}$, 
			\[(I\otimes\mathcal{F})(\Psi_{v}^*(\mathcal{Q}f_	m))(\omega,t)=\frac{\sqrt{c_q}}{|\mathbf{c}(\frac{1}{2}+it)|}\mathcal{H}_vf_m(\omega,t),\]
			for almost every $(\omega,t)\in\Omega\times\mathbb{T}$. Hence, passing to a subsequence if necessary, for almost every $(\omega,t)\in\Omega\times\mathbb{T}$
			\[\lim_{m\rightarrow+\infty}\frac{\sqrt{c_q}}{|\mathbf{c}(\frac{1}{2}+it)|}\mathcal{H}_vf_m(\omega,t)=(I\otimes\mathcal{F})(\Psi_{v}^*(\mathcal{Q}f))(\omega,t).\]
		Therefore, passing to a subsequence if necessary, for almost every $(\omega,t)\in\Omega\times\mathbb{T}$
			\[(I\otimes\mathcal{F})(\Psi_{v}^*(\mathcal{Q}f))(\omega,t)=\lim_{m\rightarrow+\infty}\frac{\sqrt{c_q}}{|\mathbf{c}(\frac{1}{2}+it)|}\mathcal{H}_vf_m(\omega,t)=\frac{\sqrt{c_q}}{|\mathbf{c}(\frac{1}{2}+it)|}\mathscr{H}_vf(\omega,t)\]
			and this concludes our proof.
	\end{proof}

\appendix 

\section{The Radon Transform between Dual Pairs}\label{appendiceHel}

The inversion of the Radon transform consists in reconstructing an unknown signal $f$ on $\mathbb{R}^d$ from its integrals over hyperplanes. This classical inverse problem generalizes in the question of recovering an unknown function on a manifold $X$ by means of its integrals over a family $\Xi$ of submanifolds. Motivated by the group structure shared by classical examples, Helgason  introduced a natural framework for such general inverse problems by modelling $X$ and $\Xi$ as two homogeneous spaces of the same locally compact group $G$. This Appendix is devoted to recall his general framework.

Let $G$ be a locally compact group. We consider a space $X$ on which $G$ acts transitively, and we denote the action on $x\in X$ by
  \[(g,x) \mapsto g[x]. \]
We fix $x_0\in X$ and we denote by $K$ the corresponding stability subgroup, so that $X\simeq G/K$ under the canonical  isomorphism $gK\mapsto g[x_0]$.  Now, we fix a closed subgroup $H$ of $G$ and we define the root manifold $\xi_0$ as
\[
\xi_0=H[x_0]\subset X.
\]
Then, for every $gH\in G/H$ we define 
\[
\xi=g[\xi_0]=gH[x_0]\subset X,
\]
which is independent of the choice of the representative $g$ of $gH\in G/H$. We set $\Xi=\{g[\xi_0]:g\in G\}$. By definition, $G$ acts transitively on $\Xi$ by the action
  \[(g,\xi) \mapsto g.\,\xi=gg'[\xi_0],\qquad \xi=g'[\xi_0], \]
and we denote by $\widetilde{H}$ the stability subgroup of $\xi_0$. Hence, $\Xi\simeq G/\widetilde{H}$ under the canonical  isomorphism $g\widetilde{H}\mapsto g.\xi_0$. We require that 
\begin{equation}\label{eq:conditioninjectivity}
H=\widetilde{H}.
\end{equation} 
By definition, $\xi_0$ is an $H$-transitive space, hence it admits a quasi-invariant measure. In Helgason's approach $\xi_0$ is supposed to carry an $H$-invariant measure, that is 
$$
\int_{\xi_0}f(h^{-1}[x]){\rm d} m_0(x)=\int_{\xi_0}f(x){\rm d} m_0(x),\qquad g\in L^1(\xi_0,{\rm d} m_0),\, h\in H.
$$
In order to define the Radon transform, we push-forward the measure $d m_0$ to $\xi=gH$ by the map $\xi_0\ni x\mapsto g[x]\in\xi$. We denote by ${\rm d} m_\xi$ the so obtained measure on $\xi$. Since the measure on $\xi_0$ is supposed to be $H$-invariant, then the measure ${\rm d} m_\xi$ does not depend on the choice of the representatives of $\xi$. 
\begin{defn}\label{defn:radon}
We define the Radon transform of $f\colon X\to \mathbb{C}$ as the map $\mathcal{R} f:\Xi\to\mathbb{C}$ given by
\begin{equation}\label{radondualradon1}
\mathcal{R} f(\xi)=\int_{\xi}f(x){\rm d} m_\xi(x),
\end{equation}
for any $f$ for which the integral converges. 
\end{defn}
Interchanging the roles of $X$ and $\Xi$, we can define 
	\[
	\check{x}_0=K.\, \xi_0,
	\]
	and for every $x=gK\in G/K$ we set 
	\[
	\check{x}=g.\, \check{x}_0=gK.\, \xi_0,
	\]
	which is independent of the choice of the representative $g$ of $gK\in G/K$.
	We may think of $\check{x}$ as the sheaf of  manifolds in $\Xi$ passing through $x\in X$. By definition, $G$ acts transitively on the orbit $\{g.\check{x}_0:g\in G\}$ and we denote by $\widetilde{K}$ the stability subgroup at $\check{x}_0$. The conditions 
	\begin{equation}\label{eq:transversalityconditionsecond}
		K=\widetilde{K}\qquad \text{and}\qquad H=\widetilde{H}
	\end{equation}
	are known as transversality conditions, and a pair $(X,\Xi)$ which satisfies \eqref{eq:transversalityconditionsecond} is called by Helgason a dual pair {\cite[Chapter II]{radon}}. The reader may consult {\cite[Chapter II]{radon}} for numerous examples of dual pairs.
By definition, $\check{x}_0$ is a $K$-transitive space, and it is supposed to carry a $K$-invariant measure, that is 
$$
\int_{\check{x}_0}F(k^{-1}.\xi){\rm d}\mu_0(\xi)=\int_{\check{x}_0}F(\xi){\rm d}\mu_0(\xi),\qquad F\in L^1(\check{x}_0,{\rm d}\mu_0),\, k\in K.
$$
We push-forward the measure $d \mu_0$ to $\check{x}=(gK)^{\check{}}$ by the map $\check{x}_0\ni\xi\mapsto g.\xi\in\check{x}$. Since the measure $\check{x}_0$ is $K$-invariant, the so obtained measure ${\rm d}\mu_x$ does not depend on the choice of the representative of $x$. 
\begin{defn}
The dual Radon transform of $F$ is the map  $\mathcal{R}^{\#} F\colon X\to\mathbb{C}$ given by 
\begin{equation}\label{dualradon}
\mathcal{R}^{\#} F(x)=\int_{\check{x}}F(\xi){\rm d}\mu_x(\xi),
\end{equation}
for any $F\colon\Xi\to\mathbb{C}$ for which the integral converges. 
\end{defn}
We conclude this Appendix showing that the horocyclic Radon transform on homogeneous trees recalled in Section~2 is precisely the Radon transform {\it \`a la} Helgason for the dual pair $(X,\Xi)$, where $X$ is an homogeneous tree and $\Xi$ is the set of horocycles on $X$. We keep the notation of Section~\ref{GA}. Once we have fixed the origins $o\in X$ and $\omega_0\in\Omega$, and consequently the closed subgroup $H$ of $G={\rm Aut}(X)$, we define the root horocycle $\xi_0$ as
	\[
	\xi_0=H[o].
	\]
	Then, for every $gH\in G/H$ we define 
	\[
	\xi=g[\xi_0]=gH[o].
	\]
	By direct computation, $\xi_0=h_{\omega_0,0}^o$, and \eqref{eq:actionhorocycles} implies that $\xi=h_{g\cdot\omega_0,0}^{g[o]}$. So that, $\Xi=\{g[\xi_0]:g\in G\}$ is exactly the set of horocycles in $X$.
	By the definition of $H$, we have that 
	\begin{equation*}
		H=\{g\in G: g[\xi_0]=\xi_0\}.
	\end{equation*}
	Thus, condition \eqref{eq:conditioninjectivity} is satisfied and $\Xi\simeq G/H$. We endow the root horocycle $\xi_0$ with the counting measure $d \mu_0$, which is an $H$-invariant measure. Then, all the horocycles in $\Xi$ are equipped with the counting measure by pushing forward $d \mu_0$ to $\xi=gH$ by the map $\xi_0\ni x\mapsto g[x]\in\xi$. It is therefore clear that the horocyclic Radon transform on homogeneous trees is precisely the Radon transform in Definition~\ref{defn:radon} when $X$ is an homogeneous tree and $\Xi$ is the family of horocycles on $X$.

\section{The quasi regular representation is not irreducible}\label{appendiceirri}
		The quasi-regular representation $\pi$ of $G={\rm Aut}(X)$ on $L^2(X)$ endowed with the counting measure is defined by $\pi(g)f(x)=f(g^{-1}[x])$, $f\in L^2(X)$. 
		For clarity, we include a short proof that $\pi$ is not irreducible.
		
		Our approach is based on the characterization of irreducibility given by Proposition~2.47 in \cite{librorosso}. The representation $\pi$ is not irreducible if and only if there exist two functions $h_1,\,h_2\in L^2(X)\setminus\{0\}$ such that $\langle h_1,\pi(\,\cdot\,)h_2\rangle_{L^2(X)}$ vanishes identically on $G$. We start by proving that for $f\in L^2(X)$ and $g\in G$, the action of $G$ on $X$ in frequency reads
		\[\mathscr{H}_v(\pi(g)f)(\omega,t)=q^{\left(\frac{1}{2}+it\right)\kappa_{\omega}(v,g[v])}\mathscr{H}_vf(g^{-1}\!.\,\omega,t),\quad (\omega,t)\in\Omega\times\mathbb{T}.\] 
		By the density of $C_c(X)$ in $L^2(X)$, it is sufficient to prove it for $f\in C_c(X)$. Indeed,
		\begin{align*}
			\mathcal{H}_v(\pi(g)f)(\omega,t)&=\sum_{x\in X}f(g^{-1}[x])q^{\left(\frac{1}{2}+it\right)\kappa_{\omega}(v,x)}\\
			&=\sum_{x\in X}f(x)q^{\left(\frac{1}{2}+it\right)\kappa_{\omega}(v,g[x])}\\
			&=q^{\left(\frac{1}{2}+it\right)\kappa_{\omega}(v,g[v])}\sum_{x\in X}f(x)q^{\left(\frac{1}{2}+it\right)\kappa_{\omega}(g[v],g[x])}\\
			&=q^{\left(\frac{1}{2}+it\right)\kappa_{\omega}(v,g[v])}\sum_{x\in X}f(x)q^{\left(\frac{1}{2}+it\right)\kappa_{g^{-1}\!.\,\omega}(v,x)}\\
			&=q^{\left(\frac{1}{2}+it\right)\kappa_{\omega}(v,g[v])}\mathcal{H}_vf(g^{-1}\!.\,\omega,t).
		\end{align*}
		
		Now we want to find two not zero functions of $L^2(X)$ whose corresponding coefficient vanishes identically on $G$. We introduce the subset \[A:=\Omega\times\left[-\frac{T}{4},\frac{T}{4}\right]\subseteq\Omega\times\mathbb{T}.\] 
		Take $f\in L^2(X)$. We know that $\mathscr{H}_vf\in L_{v,\mathbf{c}}^2(\Omega\times\mathbb{T})^\sharp $ and if we multiply $\mathscr{H}_vf$ by the characteristic function of $A$ or $A^c$ it remains in $L_{v,\mathbf{c}}^2(\Omega\times\mathbb{T})^\sharp $ since~\eqref{invhf} is true if the function is restricted to a symmetric subset. We therefore choose 
		\[h_1=\mathscr{H}_v^{-1}(\chi_A\mathscr{H}_vf),\quad h_2=\mathscr{H}_v^{-1}(\chi_{A^c}\mathscr{H}_vf).\]
		Observe that the coefficient associated to $h_1$ and $h_2$ is 
		\begin{align*}
			\langle &h_1,\pi(g)h_2\rangle_{L^2(X)} =\langle \chi_A\mathscr{H}_vf,\mathscr{H}_v(\pi(g)h_2)\rangle_{L_{v,\mathbf{c}}^2(\Omega\times\mathbb{T})^\sharp} \\
			&=\int_{\Omega\times\mathbb{T}} \chi_A(\omega,t)\chi_{A^c}(g^{-1}\!.\,\omega,t)\mathscr{H}_vf(\omega,t)\overline{\mathscr{H}_vf(g^{-1}\!.\,\omega,t)}q^{\left(\frac{1}{2}-it\right)\kappa_{\omega}(v,g[v])}\frac{c_q\de\nu^{v}(\omega)\de t}{|\textbf{c}(\frac{1}{2}+it)|^2}=0
		\end{align*}
		Finally by Proposition~2.47 in \cite{librorosso}, we conclude that $\pi$ is not irreducible.

\section*{Acknowledgement}

F. De Mari is member of the Gruppo Nazionale per l'Analisi Matematica, la Probabilità e le loro Applicazioni (GNAMPA) of the Istituto Nazionale di Alta Matematica (INdAM) and together with F. Bartolucci and M. Monti are part of the Machine Learning Genoa Center (MaLGa). We thank the anonymous referees for helping us in substantially improving the presentation.

\end{document}